\newtheorem{lemma}{Lemma}[section]
\newtheorem{remark}[lemma]{Remark}
\newtheorem*{remark*}{Remark}
\def\R{{\mathbb R}}
\def\upddots{\mathinner{\mkern 1mu\raise 1pt \hbox{.}\mkern 2mu
\mkern 2mu \raise 4pt\hbox{.}\mkern 1mu \raise 7pt\vbox {\kern 7
pt\hbox{.}}} }
\newcommand{\spn}{{Sp_{2n}(\F)}}
\newcommand{\gspn}{{GSp_{2n}(\F)}}
\newcommand{\mspn}{{\widetilde{Sp_{2n}(\F)}}}
\newcommand{\slt}{{SL_{2}(F)}}
\newcommand{\mslt}{\widetilde{{SL_{2}(F)}}}
\newcommand{\F}{F}
\newcommand{\1}{ \bold 1}
\newcommand{\Of}{\mathbb O_{\F}}
\newcommand{\Pf}{\mathbb P_{\F}}
\newcommand{\N}{\mathbb N}
\newcommand{\Z}{\mathbb Z}
\newcommand{\half}{\frac{1}{2}}
\newcommand{\ab} {|\!|}
\newcommand{\C}{\mathbb C}
\def\>{\rangle}
\def\<{\langle}
\newtheorem{lem}{Lemma}[section]
\newtheorem{thm}{Theorem}[section]
\newtheorem{prop}{Proposition}
\numberwithin{equation}{section}
\newcommand{\msl}{\widetilde{SL_2(\F)}}
\newcommand{\pbm}{\widetilde{B^d}}
\def\dotunion{
\def\dotunionD{\bigcup\kern-9pt\cdot\kern5pt}
\def\dotunionT{\bigcup\kern-7.5pt\cdot\kern3.5pt}
\mathop{\mathchoice{\dotunionD}{\dotunionT}{}{}}} \setcounter
\date{}
\begin{document}
\author{David Goldberg and Dani Szpruch}
\title {Plancherel measures for coverings of p-adic $SL_2(F)$ } \maketitle
\begin{abstract} In these notes we compute the Plancherel measures associated with genuine principal series representations of $n$-fold covers of $p-$adic $SL_2$. Along the way we also compute a higher dimensional metaplectic analog of Shahidi local coefficients. Our method involves new functional equations utilizing the Tate $\gamma$-factor and a metaplectic counterpart. As an application we prove an irreducibility theorem.
\end{abstract}
\section{Introduction}

Let $G$ be quasi-split reductive group defined over a $p$-adic field. The Plancherel measure is an analytic invariant associated with a parabolic induction on $G$. It was conjectured by Langlands, \cite{Lang},  that this invariant is a ratio of certain $L$-functions and hence of arithmetic significance. Under some mild assumptions, this conjecture was proven for generic inducing data by Shahidi in \cite{Sha 90} where the Plancherel measure was utilized to derive some results in harmonic analysis on $p$-adic reductive groups. Since, up to a well understood positive constant, the Plancherel measure equals an inverse of a scalar arising from a composition of two standard intertwining integrals, Shahidi was able to use his theory of local coefficients for that proof. In this paper we follow Shahidi's approach and compute the Plancherel measure for coverings of $SL_2(F)$ even though uniqueness of Whittaker model fails.

Fix an integer $n \geq 1$. Let $F$ be a p-adic which contains the full group of $n^{\operatorname{th}}$ order roots of 1. We shall also assume that its residual characteristic is prime to $n$.  Let $\widetilde{G(F)}$ be the $n$-fold cover of $G(F)=SL_2(F)$ afforded by the Kubota cocycle, see \cite{Kub}. Let $H(F)$ be the diagonal subgroup inside $G(F)$, let $N(F)$ be the group of upper triangular unipotent matrices in $G(F)$ and let $B(F)=H(F) \ltimes N(F)$. $\widetilde{H(F)}$,  the inverse image in $\widetilde{G(F)}$ of $H(F)$ is not Abelian. Rather, it is a two step nilpotent group. In Section \ref{car rep} we construct the irreducible genuine admissible representations of $\widetilde{H(F)}$ using an analog of Kazhdan-Patterson's standard maximal Abelian group, \cite{KP}. It is interesting to note the similarity to the $n=2$ case, if $n$ is even then the construction involves the Weil index of a character of second degree. This phenomena is not present in the construction in \cite{KP}. Let $\tau$ be a genuine smooth admissible irreducible representation of $\widetilde{H(F)}$. Let $I(\tau,s)=\operatorname{Ind}^{\widetilde{G(F)}}_{\widetilde{B(F)}} \tau_s$ be the representation of $\widetilde{G(F)}$ parabolically induced from $\tau_s$. Here $s$ is a complex parameter. Let $$A_w(\tau,s): I(\tau,s) \rightarrow I(\tau^w,-s)$$ be the standard intertwining integral associated with the unique non-trivial Weyl element of $G(F)$. Then $A_w(\tau,s)$ induces a map
$$\widehat{A}_w(\tau,s): Wh_{\psi}(\tau^w,-s)  \rightarrow  Wh_{\psi}(\tau,s) $$
where $\psi$ is a non-trivial character of $N(F)$ and  $Wh_{\psi}(\tau,s) $ is the space of $\psi$-Whittaker functionals on  $I(\tau,s)$. Unless $n \leq 2$, $Wh_{\psi}(\tau,s)$ is not one dimensional. In fact, $$\dim \bigl( Wh_{\psi}(\tau,s) \bigr)=\frac {n}{gcd(n,2)}.$$

In Section \ref{meta sha} we fix certain bases for $Wh_{\psi}(\tau^w,-s)$ and $Wh_{\psi}(\tau,s)$ and compute the matrix $D(\tau,s)$ representing $\widehat{A}_w(\tau,s)$ with respect to these bases. Using $D(\tau,s)$ and $D(\tau^w,-s)$ we prove in Section \ref{irr res} our main result: if we take the measure in the integral defining the intertwining operators to be the self dual measure with respect to $\psi$ then

\begin{equation} \label{Plancherel as L} A_{w^{-1}} (\tau^w,-s) \circ A_{w}(\tau,s)=q^{e(\chi^n,\psi)}\frac{L \bigl(ns,\chi^n \bigr)L \bigl(-ns,\chi^{-n}\bigr)}{L \bigl(1-ns,\chi^{-n} \bigr)L \bigl(1+ns,\chi^{n}\bigr)}\operatorname{Id} \end{equation}

Here $\chi$ is a character of $F^*$, $\chi^n$ is an invariant of $\tau$ and $q^{e(\chi^n,\psi)}$ is a positive constant defined in Section \ref{basics}. Observe that while $D(\tau,s)$ and $D(\tau^w,-s)$ depend on $\psi$, the constant in the right hand side of \eqref{Plancherel as L} is independent of $\psi$ except for the choice of measure. Moreover, if $\tau$ and $\psi$ are unramified then $q^{e(\chi^n,\psi)}=1$ and \eqref{Plancherel as L} may be deduced from the metaplectic Gindikin-Karpilevich formula. Indeed, if $I(\tau,s)$ is unramified and $\phi^0_{\tau,s}$ is its normalized spherical vector then from Theorem 12.1 of \cite{Mc2} it follows that
$$ A_{w}(\tau,s)\bigl(\phi^0_{\tau,s})=\frac{L \bigl(ns,\chi^n \bigr)}{L \bigl(1+ns,\chi^{n}\bigr)}\phi^0_{\tau^w,-s}.$$

As an immediate application of \eqref{Plancherel as L} we find all the reducible genuine principal series representations of $\widetilde{G(F)}$ induced from a unitary data, see Theorem \ref{savin}. The matrix $D(\tau,s)$ is a higher dimensional metaplectic analog of (the inverse of)  Shahidi local coefficients. In Lemma 1.33 of \cite{KP}, Kazhdan and Patterson have computed this matrix in the context of $n$-fold covers of $GL_r(F)$. Their result involved Gauss sums. However, while the computation in \cite{KP} addresses only representations induced from unramified data, our computation applies for all inducing data. Moreover, our method deviates from those used in \cite{KP}. In the cases where $n$ is odd we have used the Tate $\gamma$-factor and functional equation in a fundamental way: in Section \ref{ari and meta ari} we follow an elegant argument used by Ariturk for the $n=3$ case and prove a functional equation suitable for our computation.

In the cases where $n$ is even we prove a similar functional equation involving the metaplectic $\widetilde{\gamma}$-factor arising from the functional equation proven in \cite{Sz 3}. The role of $\widetilde{\gamma}$ seems to be as natural and important here as the role of the Tate $\gamma$-factor. We note that $\widetilde{\gamma}$ is represented by a certain Tate-type integral. This integral initially appeared in unpublished notes of W. Jay Sweet, \cite{Sweet}, where it was computed and utilised for the study of degenerate principal series representations of the metaplectic double cover of $\spn$.  It then appeared in \cite{Sz 2} which contains the $n=2$ case of the computation presented here. We expect $\widetilde{\gamma}$ to play a similar roll in the representation  theory of even fold covers of classical groups. In \cite{Sweet}, Sweet thanks Kudla for a helpful suggestion about the computation of $\widetilde{\gamma}$. To the best of our knowledge, this computation is not available in print. In an appendix we correct this situation and reproduce the results given in \cite{Sweet}.

The main motivation for this work is an ongoing project \cite{FGS} in which we study the analog of  Kazhdan-Patterson's exceptional representations for coverings of classical and similitude groups. The technique presented here is sufficient for the completion of the project, namely, for producing an analog of  Lemma 1.33 of \cite{KP} for all the groups in discussion. Since our method is applicable for coverings of $GL_r(F)$, our result can be used to identify the Gauss sums in \cite{KP} as $\epsilon$-factors.

Although we shall not develop this point here, our computations show that even-fold covers are more delicate than odd-fold covers as the analytic properties of $D(\tau,s)$ depend on the choice of Whittaker character for these groups. This was already noted in the $n=2$ case, see \cite{Sz 2}. This phenomena is responsible for fact that the irreducible modules that appear in the restriction of the unramified distinguished representation studied by Gelbart and Piatetski-Shapiro in \cite{GP80} and \cite{GP81} to the double cover of $\slt$ have a Whittaker model with respect to exactly one orbit of Whittaker characters.  We expect this phenomena to reappear in the study of even-fold covers of classical groups.

In a recent paper, \cite{GanGao}, Gan and Gao raised the question whether the Langlands-Shahidi method could be extended to metaplectic groups other than the metaplectic double cover of $\spn$. We hope that the results contained in these notes will contribute to this discussion.

We would like to thank Solomon Friedberg for helpful discussions on the subject matter and to Wee Teck Gan and Freydoon Shahidi for their valuable comments. We would also like to thank Gordan Savin for making his notes available to us.

\section{Preparations}
\subsection{Basic notations} \label{basics}
Let $\F$ be a p-adic field. Denote by $p$ its residual characteristic and by $q$ the cardinality of its residue field. Denote by $\Of$ its ring of integers. Fix $\varpi$, a generator of $\Pf$, the maximal ideal of $\Of$. We normalize the absolute value on $F$ such that $\ab \varpi \ab =q^{-1}$. For $\psi$, a non-trivial character of $F$ and $\chi$, a character of $\F^*$, we define $e(\psi)$ and $e(\chi)$ to be the conductors of $\psi$ and $\chi$ respectively and we define
$$e(\psi,\chi)=e(\psi)-e(\chi).$$
Let $S(F)$ be the space of Schwartz functions on $\F^*$. Given $\phi \in S(F)$ we define $\widehat{\phi} \in S(F)$ to be its $\psi$ Fourier transform, i.e.,
$$\widehat{\phi}(x)=\int_F \phi(y) \psi(xy) \, d_\psi y.$$
Here $d_\psi y$ is the self dual measure with respect to $\psi$. We define $d_\psi^*x=\ab x \ab^{-1} d_\psi x$. It is a Haar measure on $F^*$. If $\psi$ is unramified we write $dx$ and $d^*x$ for
$d_\psi x$ and  $d^*_\psi x$ respectively. For $a\in F^*$ we denote by $\psi_a$ the character of $\F$ given by $x \mapsto \psi(xa)$. Let $\gamma_F(\psi)$ be the unnormalized Weil index of character of second degree, see \cite{Weil}, defined by
$$\lim_{r \rightarrow \infty} \int_{\Pf^{-r}} \psi(x^2) \, d_\psi x.$$ It known that $\gamma^8_F(\psi)=1$. If $\F$ is of odd residual characteristic and $\psi$ is spherical then $\gamma_F(\psi)=1$. Let $\gamma_\psi:F^* \mapsto \C$ be the normalized Weil index defined by

$$\gamma_\psi(a)=\frac {\gamma_F(\psi_a)}{\gamma_F(\psi)}.$$
Recall  that $\gamma_\psi({\F^*}^2)=1$ and that for $x,y \in F^*$ we have
$$\gamma_\psi(xy)=\gamma_\psi(x)\gamma_\psi(x)(x,y)_2,$$
where $(\cdot,\cdot)_2$ is the quadratic Hilbert symbol.
\begin{remark} \label{remark on split} If $\F$ is of odd residual characteristic and $-1 \in {F^*}^2$ then $\gamma_\psi(\F^*) \in \mu_2$. If, in addition, $\psi$ is unramified then $\gamma_\psi(\Of^*)=1$ and for exactly one representative $u$ of $\Of^* / {\Of^*}^2$ we have $\gamma_\psi(u\varpi)=1$.
\end{remark}

\subsection{Tate and metaplectic-Tate gamma factors.} \label{T and meta T}
Let $\chi$ be a character of $\F^*$, let $\psi$ be a non-trivial character of $\F$ and let $s$ be a complex parameter . For  $\phi \in S(F)$ we let $\zeta(s,\chi,\phi)$ be its Mellin transform, namely, the meromorphic continuation of
$$\int_{F^*} \phi(x) \ab x \ab^s \chi(x) \, d_\psi^*x.$$
Let $$\gamma(s,\chi,\psi)=\epsilon(s,\chi,\psi)\frac{L(1-s,\chi^{-1})}{L(s,\chi)}$$
be the Tate $\gamma$-factor, \cite{T}. It is defined via the functional equation
\begin{equation} \label{tate def} \zeta(1-s,\chi^{-1},\widehat{\phi})=\gamma(s,\chi,\psi)\zeta(s,\chi,\phi). \end{equation}
Recall that both Mellin transforms in the equation above are given by absolutely convergent integrals in some common vertical strip. The following are stated in \cite{T2}.

\begin{equation} \label{Tate gamma} {\epsilon}(1-s,\chi^{-1},\psi)=\chi(-1)\epsilon^{-1}(s,\chi,\psi), \end{equation}
\begin{equation} \label{epsilon twist} \epsilon(s+t,\chi,\psi)=q^{e(\psi,\chi)t}\epsilon(s,\chi,\psi),\end{equation}
\begin{equation} \label{epsilon change psi} \epsilon(s,\chi,\psi_a)=\chi(a)\ab a \ab^{s-\half}\epsilon(s,\chi,\psi). \end{equation}
Observe that \eqref{Tate gamma} and \eqref{epsilon twist} imply that
\begin{equation} \label{epsilon product} \epsilon(s,\chi,\psi)\epsilon(-s,\chi^{-1},\psi)=\chi(-1)q^{-e(\psi,\chi)}.\end{equation}

For $\phi \in S(F)$ we now define $\widetilde{\phi}:\F^* \rightarrow \C$ by

$$\widetilde{\phi}(x)=\int_{F^*} \phi(y)\gamma_\psi^{-1}(xy) \psi(xy) d_\psi y.$$
Although $\widetilde{\phi}(x)$ is typically not an element of $S(\F)$ it was proven in \cite{Sz 3} that
$$\int_{F^*} \widetilde{\phi}(x) \chi(x) \ab x \ab^{s} d^*_\psi x$$
converges absolutely for $a<Re(s)<a+1$, for some  $a \in \R$, to some rational function in $q^{-s}$ (it was assumed in \cite{Sz 3} that $\psi$ is spherical but this is unnecessary). This enables the natural definition of $\zeta(s,\chi,\widetilde{\phi})$. Moreover, $\zeta(1-s,\chi^{-1},\widetilde{\phi})$ and $\zeta(s,\chi,\psi)$ are both given by absolutely convergent integrals in some
common vertical strip and there exists a function $\widetilde{\gamma}(s,\chi,\psi)$ such that
$$\zeta(1-s,\chi^{-1},\widetilde{\phi})=\zeta(s,\chi,\phi)\widetilde{\gamma}(s,\chi,\psi).$$
For all $\phi \in S(F)$. It was also proven in \cite{Sz 3} that $\widetilde{\gamma}(\chi^{-1},1-s,\psi)$ is the meromorphic continuation of
$$\lim_{r \rightarrow \infty}\int_{\Pf^{-r}}\chi(x) \ab x \ab^s \gamma_\psi(x)^{-1} \psi(x) \, d_\psi^* x.$$
The computation of this integral is contained in unpublished notes of W. Jay Sweet, \cite{Sweet}, and is provided here in the appendix. We have
\begin{equation} \label{meta gama formula} \widetilde{\gamma}(1-s,\chi^{-1},\psi)=\gamma_F^{-1}(\psi_{-1}) \chi(-1)\gamma^{-1}(2s,\chi^{2},{\psi_{_2}}) \gamma(s+\half,\chi,\psi). \end{equation}
We note here that although the arguments in \cite{Sz 3} are correct, the formula given there for $\widetilde{\gamma}(s,\chi,\psi)$ is slightly mistaken. The formula given here is the correct one.

\subsection{$n^{th}$ power Hilbert symbol}
Fix an integer $n \geq 1$. We shall assume that $\F^*$ contains the full group of $n^{th}$ roots of unity. Denote this cyclic group by $\mu_n$. We identify $\mu_n$ with the group of $n^{th}$ roots of unity in $\C^*$ and suppress this identification. Let $$( \cdot, \cdot):F^* \times F^* \rightarrow \mu_n$$ be the $n^{th}$ power Hilbert symbol. It is a bilinear form on $\F^*$  that defines a non-degenerate bilinear form on $\F^* / {F^*}^n$. It is known that for all $x,y \in F^*$
$$(x,y)(y,x)=(x,-x)=1.$$
Recall also that for all $x \in F^*$, $$(x,x)=(-1,x) \in \mu_2.$$

For $r \in \N$ let $\F^*_r$ be the subgroup of $\F^*$ of elements whose valuation lies in $r\Z$. Clearly
$\F^*_r \simeq \Of^* \times r\Z$.

\begin{lem} \label{kernal p n 1}If $gcd(p,n)=1$ then  $$\{x\in \F^* \mid (x,y)=1 \, \forall y\in \F^*_n \}=\F^*_n.$$
\end{lem}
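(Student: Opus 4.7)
The plan is to prove the two inclusions separately, using two standard consequences of the hypothesis $\gcd(p,n) = 1$, namely the tameness of the Hilbert symbol. Specifically, I will need: (a) $(u,v) = 1$ for all $u,v \in \Of^*$, and (b) the induced map $\Of^* / {\Of^*}^n \rightarrow \mu_n$ defined by $v \mapsto (\varpi, v)$ is an isomorphism of cyclic groups of order $n$. Both are classical: the first is the vanishing of the tame symbol on pairs of units, and the second is the statement that $\Of^* / {\Of^*}^n$ is dually paired with $\langle \varpi \rangle \pmod {{F^*}^n}$ under the nondegenerate form induced by $(\cdot,\cdot)$ on $F^* / {F^*}^n$.

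For the inclusion $F^*_n \subseteq \{x \mid (x,y)=1 \ \forall y \in F^*_n\}$, I will write two arbitrary elements of $F^*_n$ in the form $x = u_1 \varpi^{na}$ and $y = u_2 \varpi^{nb}$ with $u_i \in \Of^*$ and $a,b \in \Z$, expand $(x,y)$ by bilinearity into four factors, and eliminate them one by one: $(u_1,u_2) = 1$ by (a); the mixed factors $(u_1,\varpi)^{nb}$ and $(\varpi,u_2)^{na}$ vanish since $(u_i,\varpi) \in \mu_n$; and $(\varpi,\varpi)^{n^2 ab} = 1$ for the same reason.

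For the reverse inclusion, assume $x = u \varpi^a$ with $u \in \Of^*$ and $a \in \Z$ satisfies $(x,y) = 1$ for every $y \in F^*_n$. I will test only against $y \in \Of^* \subset F^*_n$; then $(x,y) = (u,y)(\varpi,y)^a = (\varpi,y)^a$ using (a). Applying (b), the element $(\varpi,y)$ runs through all of $\mu_n$ as $y$ varies in $\Of^*$, so $(\varpi,y)^a = 1$ for every $y$ forces $a \equiv 0 \pmod n$, hence $x \in F^*_n$.

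There is no real obstacle beyond invoking (a) and (b); the argument is essentially bookkeeping with the bilinearity of the Hilbert symbol and the fact that its values lie in $\mu_n$. The slight subtlety worth flagging is that property (b) already encodes the nontriviality needed for the reverse inclusion, so the assumption $\gcd(p,n)=1$ is used in an essential (not merely cosmetic) way on both sides.
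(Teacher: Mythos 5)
Your proof is correct. Note, however, that the paper does not argue this lemma at all: its ``proof'' is a one-line citation to Section 5 of Chapter XIII of Weil's \emph{Basic Number Theory}, so your write-up is a genuinely self-contained alternative. The two ingredients you isolate are exactly the content that the citation is meant to supply in the tame case $\gcd(p,n)=1$: (a) the vanishing of the symbol on pairs of units is the tame-symbol formula $(x,y)=\overline{\bigl((-1)^{v(x)v(y)}x^{v(y)}y^{-v(x)}\bigr)}^{(q-1)/n}$ evaluated at units, and (b) the surjectivity of $v\mapsto(\varpi,v)$ on $\Of^*$ with kernel ${\Of^*}^n$ follows from the same formula together with $n\mid q-1$ (forced by $\mu_n\subseteq\F$ and $\gcd(p,n)=1$) and Hensel's lemma giving $1+\Pf\subseteq{\Of^*}^n$. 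Given (a) and (b), your bookkeeping for both inclusions is accurate, and you are right that (b) is where the hypothesis enters essentially for the hard inclusion: without tameness the pairing of $\langle\varpi\rangle$ against units need not exhaust $\mu_n$, and the left-hand set can be strictly larger than $\F^*_n$. If you wanted the argument fully self-contained you would still need to prove (a) and (b), which is precisely what the paper's reference covers; as written, your proof trades the paper's single citation for two smaller, more standard ones.
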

\begin{proof}  Section 5 of Chapter XIII of \cite{Weil book}.
\end{proof}
For $x \in F^*$ define $\eta_x$ to be the character of $F^*$ given by
$$\eta_x(y)=(x,y).$$
\begin{lem} Suppose $gcd(p,n)=1$. Then $\eta_x$ is trivial on $1+\Pf$. Furthermore, $\eta_x$ is unramified if and only if $x\in F^*_n$.
\end{lem}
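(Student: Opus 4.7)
The plan is to prove the two assertions in turn, relying on bilinearity of the Hilbert symbol, the fact that it takes values in $\mu_n$, and Lemma \ref{kernal p n 1}.

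For the first assertion, I would first show that the hypothesis $\gcd(p,n)=1$ forces $1+\Pf \subseteq (\F^*)^n$. Given $y \in 1+\Pf$, look at the polynomial $T^n - y$. At $T=1$ its value lies in $\Pf$, while its derivative at $T=1$ equals $n$, which is a unit since $\gcd(p,n)=1$. Hensel's lemma then produces an $n^{\text{th}}$ root of $y$ in $1+\Pf$. Writing $y=z^n$ and using bilinearity gives $\eta_x(y)=(x,z^n)=(x,z)^n$, and since $(x,z)\in\mu_n$ this equals $1$. Thus $\eta_x$ is trivial on $1+\Pf$.

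For the second assertion, the central observation is that $\Of^*\subseteq \F^*_n$ (units have valuation $0\in n\Z$) and every element of $\F^*_n$ can be written as $u\varpi^{kn}$ with $u\in\Of^*$ and $k\in\Z$. By bilinearity,
\[
(x,u\varpi^{kn})=(x,u)\cdot (x,\varpi)^{kn},
\]
and $(x,\varpi)^{kn}=\bigl((x,\varpi)^n\bigr)^k=1$ automatically since the symbol takes values in $\mu_n$. Hence triviality of $\eta_x$ on $\Of^*$ is the same as triviality of $\eta_x$ on all of $\F^*_n$. By Lemma \ref{kernal p n 1} the latter happens precisely when $x\in\F^*_n$, which proves the equivalence.

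There is no real obstacle here — both parts reduce to elementary bookkeeping once the right ingredients are assembled. The only mild subtlety is remembering that the identity $(x,\varpi)^n=1$ is for free (it comes from the codomain being $\mu_n$, not from any deeper property), and that this is what lets us pass between the unit group and the larger group $\F^*_n$ in order to apply Lemma \ref{kernal p n 1}.
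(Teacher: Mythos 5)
Your proof is correct and follows essentially the same route as the paper: the first assertion via Hensel's lemma giving $1+\Pf\subseteq(\F^*)^n$, and the second by reducing triviality on $\Of^*$ to triviality on $\F^*_n$ (using that $(x,\varpi)^n=1$) and invoking Lemma \ref{kernal p n 1}, which is exactly the content behind the paper's ``the second is clear.'' Nothing further is needed.
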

\begin{proof} By Hensel's lemma, $1+\Pf \subseteq {\F^*}^n$ provided that $gcd(p,n)=1$. This proves the first assertion. The second is clear.
\end{proof}
From this point we assume that  $gcd(p,n)=1$. This assumption is relaxed in the appendix and in remark \ref{last remark}. We shall denote $$d=\frac{n} {gcd(n,2)}.$$ Define $$\beta_\varpi:F_d^* \rightarrow F^*$$ by $$\beta_\varpi(u\varpi^{md})=u\varpi^m.$$ It is an isomorphism. Note that $\beta_\varpi$ depends on the choice of a uniformizer.
\begin{lem} Suppose that $n$ is even.  For $x,y \in F_d^*$ we have $(x,y)^2=1$ and
$$(x, y)=\bigr(\beta_\varpi(x),\beta_\varpi(y) \bigl)_2$$
\end{lem}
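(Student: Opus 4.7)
Using the decomposition $F_d^* \cong \Of^* \times d\Z$, I will write $x = u\varpi^{ad}$ and $y = v\varpi^{bd}$ with $u, v \in \Of^*$ and $a, b \in \Z$. Throughout I will use bilinearity together with the fact (from the unramified-character lemma just above) that $(w, z) = 1$ whenever $w, z \in \Of^*$, valid for both the $n$-th power and the quadratic Hilbert symbol, since such $w$ lies in $F_n^* \subseteq F_d^*$ and so $\eta_w$ is unramified.

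For the first assertion I exploit $2d = n$: since the $n$-th power Hilbert symbol is $n$-torsion,
\[ (x, y)^2 = (x, y^2) = (x, v^2 \varpi^{bn}) = (x, v^2), \]
and a further expansion yields $(x, v^2) = (u, v)^2 (\varpi, v)^{2ad} = 1 \cdot (\varpi, v)^{an} = 1$.

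For the main identity I will expand both sides bilinearly:
\[ (x, y) = (u, v)\,(u, \varpi)^{bd}\,(\varpi, v)^{ad}\,(\varpi, \varpi)^{abd^2}, \]
\[ \bigl(\beta_\varpi(x), \beta_\varpi(y)\bigr)_2 = (u, v)_2\,(u, \varpi)_2^{b}\,(\varpi, v)_2^{a}\,(\varpi, \varpi)_2^{ab}. \]
The $(u, v)$ and $(u, v)_2$ terms vanish by the unramified-character observation. To match the rest I will invoke the standard compatibility $(z, w)^{n/2} = (z, w)_2$ of Hilbert symbols (a one-line consequence of the Kummer description $(z, w)_k = \sigma_z(w^{1/k})/w^{1/k}$, valid since $\mu_n \subseteq F^*$). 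Combined with $d = n/2$, this rewrites $(u, \varpi)^{bd}$ as $(u, \varpi)_2^{b}$, $(\varpi, v)^{ad}$ as $(\varpi, v)_2^{a}$, and $(\varpi, \varpi)^{abd^2}$ as $(\varpi, \varpi)_2^{abn/2}$.

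The identity therefore reduces to $(\varpi, \varpi)_2^{ab(n/2 - 1)} = 1$, which I expect to be the main obstacle, since a naive term-by-term comparison on the diagonal $(\varpi, \varpi)$ piece fails. I will resolve it by splitting on $n \bmod 4$: if $n \equiv 2 \pmod 4$ the exponent $n/2 - 1$ is even and the identity is automatic, as $(\varpi, \varpi)_2 \in \mu_2$; if $n \equiv 0 \pmod 4$ then $\mu_4 \subseteq \mu_n \subseteq F^*$, so $-1 = i^2$ is a square in $F^*$, and the universal identity $(z, z)_2 = (-1, z)_2$ already cited in the paper forces $(\varpi, \varpi)_2 = 1$ outright. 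In both cases the remaining factor collapses and the two displayed products agree.
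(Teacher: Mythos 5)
Your proof is correct, and it follows the paper's skeleton (the decomposition $x=u\varpi^{ad}$, $y=v\varpi^{bd}$, bilinear expansion, and a final case split on $n \bmod 4$) but differs in the key intermediate step. The paper matches the unit--uniformizer cross terms by identifying both $u\mapsto(u,\varpi)^{d}$ and $u\mapsto(u,\varpi)_2$ with the unique non-trivial quadratic character $\chi_o$ of $\Of^*$, and is then left to compare $(-1,\varpi)^{d^2mm'}$ with $(-1,\varpi)_2^{mm'}$ directly, which it settles by showing that $-1\in{F^*}^2$ if and only if $-1\in{F^*}^n$ (both being equivalent to $\mu_{2n}\subseteq F^*$). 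You instead invoke the degree compatibility of norm-residue symbols, $(z,w)^{n/2}=(z,w)_2$, which treats all terms uniformly and isolates the single diagonal obstruction $(\varpi,\varpi)_2^{ab(d-1)}=1$, disposed of by parity when $n\equiv 2\pmod 4$ and by $-1\in{F^*}^2$ when $4\mid n$. What your route buys is a cleaner reduction and no appeal to uniqueness of the quadratic character of $\Of^*$; what it costs is importing a class-field-theoretic fact not set up in the paper, whereas the paper's argument stays within the tame-symbol facts already established in that section. Two small points to tighten: your appeal to the paper's unramified-character lemma literally covers only the $n$-th power symbol, so for $(u,v)_2=1$ you should either cite odd residue characteristic (forced by $\gcd(p,n)=1$ with $n$ even) or simply note that it follows from your own compatibility, $(u,v)_2=(u,v)^{d}=1$; and the identity $(z,z)_2=(-1,z)_2$ is stated in the paper only for the $n$-th symbol, though it is indeed the same universal Hilbert-symbol identity, so this is cosmetic rather than a gap.
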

\begin{proof}
Given $x,y \in F^*$ write  $x=u\varpi^{dm}, \, y=u'\varpi^{dm'}$ where $u,u' \in \Of^*$ we have
$$(x,y)=(u,\varpi)^{dm'}(u',\varpi)^{-dm}(-1,\varpi)^{d^2mm'}.$$
Since $n=2d$ the first assertion follow. Denote by $\chi_o$ the unique non-trivial quadratic character of $\Of^*$. With this notation we have
$$(x,y)=\chi^{m'}_{o}(u)\chi^{m}_{o}(u')(-1,\varpi)^{d^2mm'}$$
and
$$\bigr(\beta_\varpi(x),\beta_\varpi(y) \bigl)_2=\chi^{m'}_{o}(u)\chi^{m}_{o}(u')(-1,\varpi)_2^{mm'}.$$
Thus, the proof is done once we show that
$$(-1,\varpi)^{d^2mm'}=(-1,\varpi)_2^{mm'}.$$
Indeed, if $n=0 \, (\operatorname{mod }4)$ then $d$ is even and since $F$ contains a primitive $4^{\operatorname{th}}$ root of 1 we have $-1 \in {F^*}^2$. This shows that
$$(-1,\varpi)^{d}=(-1,\varpi)_2=1.$$
Suppose now that $n=2 \, (\operatorname{mod }4)$. It is sufficient to show that $$(-1,\varpi)=(-1,\varpi)_{2}.$$
We note that
$$ (-1,\varpi)_{_2}=\begin{cases} 1 & -1 \in {F^*}^2; \\ -1 & -1 \not \in {F^*}^2,\end{cases} \, \, \, (-1,\varpi)=\begin{cases} 1 & -1 \in {F^*}^n; \\ -1 & -1 \not \in {F^*}^n. \end{cases}$$
One can easily see that the assertion that $-1 \in {F^*}^2$ is equivalent to the assertion that $-1 \in {F^*}^n$ (in fact, both assertions are equivalent to the assertion that $F^*$ contains the full group of $2n^{\operatorname{th}}$ roots of 1).
\end{proof}

\begin{lem} \label{split hilbert} Define $\xi_{\psi,\varpi}:F^*_d \rightarrow \C^1$ to be the trivial map if n is odd and $\gamma_\psi^{-1} \circ \beta_\varpi$ if $n$ is even. Then $\xi_\psi$ splits the Hilbert symbol on $F_d^* \times F_d^*$, i.e.,
$$\xi_{\psi,\varpi}(xy)=\xi_{\psi,\varpi} (x)\xi_{\psi,\varpi} (y)(x,y)$$
for all $x,y \in \F^*_d$.\\
\end{lem}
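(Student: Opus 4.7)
The plan is to dispose of the two cases $n$ odd and $n$ even separately, in each case reducing the asserted identity to results already established in the excerpt.

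If $n$ is odd then $d=n$ and $\xi_{\psi,\varpi}\equiv 1$, so the identity to be proven collapses to $(x,y)=1$ for all $x,y\in F_n^*$. This is immediate from Lemma \ref{kernal p n 1}: that lemma identifies $F_n^*$ with the left kernel of the $n$-th power Hilbert symbol restricted to $F_n^*$, so every $x\in F_n^*$ pairs trivially with every $y\in F_n^*$.

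If $n$ is even then $d=n/2$ and the identity to be proven is
\[
\gamma_\psi^{-1}\bigl(\beta_\varpi(xy)\bigr)=\gamma_\psi^{-1}\bigl(\beta_\varpi(x)\bigr)\gamma_\psi^{-1}\bigl(\beta_\varpi(y)\bigr)(x,y).
\]
Three ingredients combine to give this. First, inspection of the definition $\beta_\varpi(u\varpi^{md})=u\varpi^m$ shows that $\beta_\varpi$ is a group homomorphism from $F_d^*$ to $F^*$, so $\beta_\varpi(xy)=\beta_\varpi(x)\beta_\varpi(y)$. Second, the cocycle relation $\gamma_\psi(ab)=\gamma_\psi(a)\gamma_\psi(b)(a,b)_2$ recalled in Section \ref{basics}, applied to $a=\beta_\varpi(x)$ and $b=\beta_\varpi(y)$ and inverted, yields
\[
\gamma_\psi^{-1}\bigl(\beta_\varpi(x)\beta_\varpi(y)\bigr)=\gamma_\psi^{-1}\bigl(\beta_\varpi(x)\bigr)\gamma_\psi^{-1}\bigl(\beta_\varpi(y)\bigr)\bigl(\beta_\varpi(x),\beta_\varpi(y)\bigr)_2^{-1}.
\]
Third, since the quadratic Hilbert symbol takes values in $\mu_2$, the inverse on the right equals $\bigl(\beta_\varpi(x),\beta_\varpi(y)\bigr)_2$ itself, and by the preceding lemma this is exactly $(x,y)$. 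Substituting gives the claim.

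Neither case presents a substantial obstacle; both are essentially a one-line reduction to a previously established lemma. The only point demanding care is the bookkeeping of inverses in the even case, where one must use the self-inverse property of $\mu_2$-valued symbols to align the sign conventions of $\gamma_\psi^{-1}$ with those of the quadratic Hilbert symbol.
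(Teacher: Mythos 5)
Your proof is correct and is exactly the argument the paper leaves implicit (its proof is just ``Clear.''): the odd case is Lemma \ref{kernal p n 1} applied with $d=n$, and the even case combines the homomorphism property of $\beta_\varpi$, the Weil-index cocycle relation $\gamma_\psi(ab)=\gamma_\psi(a)\gamma_\psi(b)(a,b)_2$, and the preceding lemma identifying $(x,y)$ with $\bigl(\beta_\varpi(x),\beta_\varpi(y)\bigr)_2$. No gaps; the inverse bookkeeping via $\mu_2$-valuedness is handled correctly.
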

\begin{proof} Clear.
\end{proof}
\begin{lem} \label{split porp}Suppose that $n$ is even.\\
1. If $d$ is odd then $\xi_{\psi,\varpi}=\gamma_\psi^{-1}$. \\
2. If $d$ is even then
$$\xi_{\psi,\varpi}(x)= \begin{cases} \gamma_\psi^{-1}(x)  & x \in F^*_n; \\ \gamma_\psi^{-1}(x\varpi)  & x  \not \in F^*_n,  \end{cases}=\gamma_\psi^{-1}(x)\begin{cases} 1  & x \in F^*_n; \\ \gamma_\psi^{-1}(\varpi)(\varpi,x)^d  & x  \not \in F^*_n.  \end{cases}$$
\end{lem}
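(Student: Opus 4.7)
The plan is to reduce both assertions to a single parity computation in the exponent of $\varpi$. Given $x\in F_d^*$, write $x = u\varpi^{md}$ with $u\in\Of^*$; by definition $\beta_\varpi(x)=u\varpi^m$, so $\xi_{\psi,\varpi}(x)=\gamma_\psi^{-1}(u\varpi^m)$. The basic identity I would exploit is
$$u\varpi^{md} = (u\varpi^m)\cdot\varpi^{m(d-1)},$$
combined with the cocycle relation $\gamma_\psi(ab)=\gamma_\psi(a)\gamma_\psi(b)(a,b)_2$, the triviality of $\gamma_\psi$ on squares, and the fact that squares pair trivially in the quadratic Hilbert symbol. These three facts together immediately give $\gamma_\psi(u\varpi^{md})=\gamma_\psi(u\varpi^m)$ whenever $m(d-1)$ is even, and the two claims of the lemma are then just the different parity configurations.

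For part 1, $d$ is odd so $d-1$ is even, hence $m(d-1)$ is always even and the identity above yields $\xi_{\psi,\varpi}(x)=\gamma_\psi^{-1}(x)$ directly. For part 2, $d$ is even; since $\operatorname{val}(x)=md$ and $n=2d$, the condition $x\in F_n^*$ amounts to $m$ being even, in which case $m(d-1)$ is again even and the same computation gives $\xi_{\psi,\varpi}(x)=\gamma_\psi^{-1}(x)$. If instead $m$ is odd, then since $d-1$ is also odd, the exponent $m(d-1)$ is odd; however $m(d-1)+1$ is even, and rewriting
$$x\varpi = u\varpi^{md+1} = (u\varpi^m)\cdot\varpi^{m(d-1)+1},$$
the same cocycle argument (now the extra factor is a square) gives $\gamma_\psi(x\varpi)=\gamma_\psi(u\varpi^m)$, hence $\xi_{\psi,\varpi}(x)=\gamma_\psi^{-1}(x\varpi)$. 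This produces the first form displayed in the lemma.

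The second form in part 2 is a formal rewrite: expanding $\gamma_\psi(x\varpi)=\gamma_\psi(x)\gamma_\psi(\varpi)(x,\varpi)_2$ and inverting, noting $(x,\varpi)_2=\pm1$, gives $\gamma_\psi^{-1}(x\varpi)=\gamma_\psi^{-1}(x)\gamma_\psi^{-1}(\varpi)(x,\varpi)_2$. The equality $(x,\varpi)_2=(\varpi,x)^d$ then follows from the symmetry of the quadratic symbol together with the standard relation $(a,b)^{n/2}=(a,b)_2$ between the $n$-th power and the quadratic Hilbert symbols, applied with $n=2d$. No step in this plan is genuinely obstructive; the only thing to be careful about is the parity bookkeeping and the clean passage between the $n$-th power Hilbert symbol that appears in $\xi_{\psi,\varpi}$ via the cocycle of $\gamma_\psi$ is the quadratic one, both of which are routine once the decomposition $u\varpi^{md}=(u\varpi^m)\varpi^{m(d-1)}$ is in place.
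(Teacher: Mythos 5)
Your argument is correct and is essentially the paper's own proof: the paper likewise notes that $\beta_\varpi(y)\in y{F^*}^2$ when the exponent $m(d-1)$ is even, and $\beta_\varpi(y)\in \varpi y{F^*}^2$ otherwise, and concludes from the invariance of $\gamma_\psi$ under multiplication by squares. Your explicit verification of the second displayed equality in part 2, via $\gamma_\psi^{-1}(x\varpi)=\gamma_\psi^{-1}(x)\gamma_\psi^{-1}(\varpi)(x,\varpi)_2$ together with $(\varpi,x)^d=(\varpi,x)_2=(x,\varpi)_2$, is a detail the paper leaves implicit and is also correct.
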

\begin{proof} Part 1 follows from the fact that if $n=2 \, ( \operatorname{mod} \, 4)$ then  $\beta_\varpi(y) \in y{F^*}^2$. Part two follows from the fact that if $n=0 \, ( \operatorname{mod}\, 4)$ then
for $y \in F^*_n$ we have $\beta_\varpi(y) \in y{F^*}^2$ while for $y  \not \in F^*_n$ we have $\beta_\varpi(y) \in \varpi y{F^*}^2$. 
\end{proof}
\subsection{Characteristic functions}
Denote by $F^*_{n,k}$ the set of elements of $\F^*$ of the form $u\varpi^m$ where $u \in \Of^*$ and $m\in k+n\Z$. Thus, $F^*_n=F^*_{n,0}$.
Fix a unit $u_{_0}$ such that $\xi=(u_{_0},\varpi)$ is a primitive $n^{th}$ root of 1. Define a function  $$\beta_{n,k}:F^* \rightarrow \C$$ by
$$\beta_{n,k}(x)=\frac 1 n \sum_{l=0}^{n-1}(u_{_0},x\varpi^{-k})^l.$$
\begin{lem} $\beta_{n,k}$ is the characteristic function of  $F^*_{n,k}$.
\end{lem}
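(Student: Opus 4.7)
The plan is to compute $\beta_{n,k}(x)$ directly by writing $x = u\varpi^m$ with $u \in \Of^*$ and $m \in \Z$, and then applying the standard orthogonality for $n$-th roots of unity.

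First I would compute $(u_0, x\varpi^{-k})$. By bilinearity,
$$(u_0, x\varpi^{-k}) = (u_0, u) \cdot (u_0, \varpi)^{m-k} = (u_0, u) \cdot \xi^{m-k}.$$
The crucial point is that $(u_0, u) = 1$ for every $u \in \Of^*$. This follows from the preceding lemma on $\eta_x$: since $u_0 \in \Of^* \subseteq F^*_n$, the character $\eta_{u_0}$ is unramified, so it is trivial on $\Of^*$. Hence $(u_0, x\varpi^{-k}) = \xi^{m-k}$.

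Next, since $\xi$ is a primitive $n$-th root of unity, the orthogonality relation
$$\frac{1}{n}\sum_{l=0}^{n-1} \zeta^l = \begin{cases} 1 & \zeta = 1, \\ 0 & \zeta \neq 1, \end{cases}$$
applied to $\zeta = \xi^{m-k}$ yields
$$\beta_{n,k}(x) = \frac{1}{n}\sum_{l=0}^{n-1} \xi^{(m-k)l} = \begin{cases} 1 & m \equiv k \pmod{n}, \\ 0 & \text{otherwise}. \end{cases}$$
This is precisely the statement that $\beta_{n,k}$ is the characteristic function of $F^*_{n,k}$.

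There is really no obstacle here; the only thing to verify carefully is the vanishing $(u_0, u) = 1$ for $u \in \Of^*$, which is why the assumption $\gcd(p,n) = 1$ enters (through the earlier lemma that makes $\eta_{u_0}$ unramified). Everything else is a one-line application of the geometric series for $n$-th roots of unity.
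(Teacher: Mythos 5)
Your proof is correct and follows essentially the same route as the paper: write $x=u\varpi^m$, reduce $(u_0,x\varpi^{-k})$ to $\xi^{m-k}$, and conclude by the orthogonality of $n$-th roots of unity. The only difference is that you spell out the step the paper dismisses as ``clearly,'' namely that $(u_0,u)=1$ for $u\in\Of^*$ because $\eta_{u_0}$ is unramified (which is where $\gcd(p,n)=1$ enters), and that is a valid justification.
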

\begin{proof}
Clearly, if $x \in F^*_{n,k}$ then $\beta_k(x)=1$. On the other hand, if $x=u \varpi^m$ where $m \not \in k+n\Z$ then $1 \neq (u_{_0},x\varpi^{-k}) \in \mu_n.$ Thus,  $\beta_k(x)=0$.
\end{proof}
Since $x\mapsto (u_{_0},x)$ is an unramified character then $$(u_{_0},x)=\ab x \ab^c$$ for some purely imaginary number $c$. Thus, $\xi=q^{-c}$ and
\begin{equation} \label{beta as sum for inv} \beta_{n,k}(x)=\frac 1 n \sum_{l=0}^{n-1} \xi^{-kl}\ab x \ab^{lc}=\frac 1 n \sum_{l=0}^{n-1} \xi^{kl}\ab x \ab^{-lc}. \end{equation}

\section{Functional equations.} \label{ari and meta ari}
In this Section we generalize an argument that appears  in \cite{Ariturk}. Precisely, Lemmas 3.1 and 3.2 in \cite{Ariturk} are the $n=3$ case of  Theorem \ref{few functional equations} below.
For $\phi \in S(F)$ define $$\zeta_{n,k}(s,\chi,\phi)=\zeta(s,\chi,\phi \cdot \beta_{n,k}).$$

\begin{thm} \label{few functional equations} For $0\leq k<n$ we have

\begin{equation} \label{aritur} \zeta_{n,k}(s,\chi,\widehat{\phi})=\sum_{m=0}^{n-1}\theta_{m}(s,\chi,\psi) \zeta_{n,m+e(\psi,\chi)-k}(1-s,\chi^{-1},\phi), \end{equation}
where, for unramified $\chi$,
$$\theta_{m}(s,\chi,\psi) =\epsilon^{-1}(s,\chi,\psi)L(ns,\chi^n) \bigl(\chi(\varpi)q^{-s}\bigr)^m \times
\begin{cases} (1-q^{-1})  & 0 \leq m \leq n-2; \\ \\ L^{-1}(1-ns,\chi^{-n}) & m=n-1, \end{cases}$$
and where for ramified $\chi$,
$$\theta_{m}(s,\chi,\psi) =\begin{cases} \chi(-1)\epsilon^{-1}(s,\chi,\psi)  & m=0; \\ \\ 0 & m \neq 0. \end{cases}$$
\end{thm}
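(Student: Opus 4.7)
The strategy is to expand $\beta_{n,k}$ as a linear combination of unramified characters of $F^*$, apply Tate's functional equation termwise, and then reassemble via finite Fourier analysis on $\mu_n$. From \eqref{beta as sum for inv} one reads off
$$\zeta_{n,k}(s,\chi,\phi) = \frac{1}{n}\sum_{l=0}^{n-1}\xi^{-kl}\zeta(s+lc,\chi,\phi),$$
and Fourier inversion on $\mu_n$ gives $\zeta(s+lc,\chi,\phi)=\sum_{k'=0}^{n-1}\xi^{k'l}\zeta_{n,k'}(s,\chi,\phi)$. Applying the first formula to $\widehat{\phi}$, then \eqref{tate def} in the form $\zeta(t,\chi,\widehat{\phi})=\gamma(1-t,\chi^{-1},\psi)\zeta(1-t,\chi^{-1},\phi)$, and finally the inverse transform to the result, one obtains
$$\zeta_{n,k}(s,\chi,\widehat{\phi})=\sum_{k'=0}^{n-1}\zeta_{n,k'}(1-s,\chi^{-1},\phi)\cdot\frac{1}{n}\sum_{l=0}^{n-1}\xi^{-(k+k')l}\gamma(1-s-lc,\chi^{-1},\psi).$$
Setting $k'=m+e(\psi,\chi)-k$ identifies
$$\theta_m(s,\chi,\psi) = \frac{1}{n}\sum_{l=0}^{n-1}\xi^{-(m+e(\psi,\chi))l}\gamma(1-s-lc,\chi^{-1},\psi),$$
so the theorem reduces to evaluating this finite sum in closed form.

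Combining \eqref{Tate gamma} and \eqref{epsilon twist} with $q^{-lc}=\xi^l$ (recall $\xi=q^{-c}$) yields $\epsilon(1-s-lc,\chi^{-1},\psi)=\chi(-1)\xi^{e(\psi,\chi)l}\epsilon^{-1}(s,\chi,\psi)$. The factor $\xi^{e(\psi,\chi)l}$ exactly cancels the outer twist $\xi^{-e(\psi,\chi)l}$, and $\chi(-1)\epsilon^{-1}(s,\chi,\psi)$ pulls out of the sum. When $\chi$ is ramified the $L$-factors of $\gamma$ are trivial, and the residual sum $\frac{1}{n}\sum_l\xi^{-ml}$ collapses by orthogonality of characters of $\mu_n$ to $\delta_{m,0}$, yielding $\chi(-1)\epsilon^{-1}(s,\chi,\psi)\delta_{m,0}$ as required.

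When $\chi$ is unramified one has $\chi(-1)=1$; writing $y=\chi(\varpi)q^{-s}$ so that $L(s+lc,\chi)=(1-y\xi^l)^{-1}$ and $L(1-s-lc,\chi^{-1})=(1-q^{-1}y^{-1}\xi^{-l})^{-1}$, the same $\epsilon$-simplification reduces the task to
$$\theta_m(s,\chi,\psi) = \epsilon^{-1}(s,\chi,\psi)\cdot\frac{1}{n}\sum_{l=0}^{n-1}\xi^{-ml}\,\frac{1-q^{-1}y^{-1}\xi^{-l}}{1-y\xi^l}.$$
The key algebraic ingredient is the partial-fraction identity
$$\frac{1}{n}\sum_{l=0}^{n-1}\frac{\xi^{-jl}}{1-y\xi^l}=\frac{y^j}{1-y^n},\qquad 0\le j\le n-1,$$
proved by clearing the denominator $1-y^n=\prod_l(1-y\xi^l)$ and matching values at the $n$ points $y=\xi^{-l_0}$ via the classical evaluation $\prod_{j\ne 0}(1-\xi^j)=n$. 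Splitting the numerator and applying this identity with $j=m$ and $j=m+1$ produces $y^m(1-q^{-1})/(1-y^n)=(1-q^{-1})y^m L(ns,\chi^n)$ whenever $0\le m\le n-2$. For the exceptional index $m=n-1$ the second application requires $j=n\equiv 0\pmod{n}$, so the $j=0$ case of the identity contributes the extra factor $1-q^{-1}y^{-n}=L^{-1}(1-ns,\chi^{-n})$, which is precisely the discrepancy in the stated formula.

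The main obstacle is bookkeeping: tracking Fourier-inversion signs so that the index $k'=m+e(\psi,\chi)-k$ lands in the right $\mu_n$-coset, checking that the conductor-shift $\xi^{e(\psi,\chi)l}$ arising from \eqref{epsilon twist} exactly cancels the exterior twist, and verifying that the wrap-around at $m=n-1$ produces precisely $L^{-1}(1-ns,\chi^{-n})$ without a stray unit. Beyond this, every step is a direct manipulation using only the Tate-theoretic identities collected in Section~\ref{basics} and the partial-fraction identity above.
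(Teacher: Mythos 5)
Your proposal is correct and follows essentially the same route as the paper: expand $\beta_{n,k}$ via \eqref{beta as sum for inv}, apply Tate's functional equation \eqref{tate def} termwise, simplify the $\epsilon$-factors with \eqref{Tate gamma} and \eqref{epsilon twist}, and reassemble by finite Fourier analysis on $\mu_n$, with the unramified case handled by root-of-unity rational-function identities. The only difference is cosmetic: the paper recombines inside the integral against $\phi$ (expanding the kernel $\delta_{k,\psi,\chi,s}$ in the functions $\beta_{n,\cdot}$ using the product identities for $\prod_m(1-q^{-s}\chi(\varpi)\xi^m)$), which is equivalent to your partial-fraction identity applied directly at the level of the zeta integrals.
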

\begin{proof} It is sufficient to prove this result for $s$ such that all Mellin transforms in \eqref{aritur} are given by absolutely convergent integrals. We have
$$\zeta_{n,k}(s,\chi,\widehat{\phi})=\int_{F^*} \beta_{n,k}(x)\widehat{\phi}(x) \chi(x) \ab x \ab^s d^*_\psi x=\frac 1 n \sum_{l=0}^{n-1} \xi^{-kl}  \int_{F^*} \widehat{\phi}(x) \chi(x) \ab x \ab^{s+lc} d^*_\psi x=$$
$$\frac 1 n \sum_{l=0}^{n-1} \xi^{-kl}  \gamma\bigl(1-(s+lc),\chi^{-1},\psi \bigr) \int_{F^*}\phi(x) \chi^{-1}(x) \ab x \ab^{1-s-lc} d^*_\psi x=$$
$$\int_{F^*} \delta_{k,\psi,\chi}(x)\phi(x) \chi^{-1}(x) \ab x \ab^{1-s} d^*_\psi x$$
where $$\delta_{k,\psi,\chi,s}(x)=\sum_{l=0}^{n-1} \xi^{-kl}  \gamma\bigl(1-(s+lc),\chi^{-1},\psi \bigr)\ab x \ab^{-lc}.$$
It remains to show that
$$\delta_{k,\psi,\chi,s}=\sum_{m=0}^{n-1}\theta_{m}(s,\chi,\psi) \beta_{n,m+e(\psi,\chi)-k}.$$
Suppose first that $\chi$ is ramified. In this case, by \eqref{Tate gamma} and \eqref{epsilon twist} we have
$$\delta_{k,\psi,\chi,s}(x)=\chi(-1)\epsilon^{-1}(s,\chi, \psi) \sum_{l=0}^{n-1} \xi^{l\bigl(e(\psi,\chi)-k \bigr)}  \ab x \ab^{-lc}.$$
By \eqref{beta as sum for inv} we are done. Suppose now that $\chi$ is unramified. By  \eqref{Tate gamma} and \eqref{epsilon twist} we now have
$$\delta_{k,\psi,\chi,s}= \epsilon^{-1}(s,\chi, \psi) \frac 1 n \sum_{l=0}^{n-1} \ab x \ab^{-lc}\xi^{l(e(\psi)-k)}  \frac{1-q^{-1}q^{s}\chi^{-1}(\varpi)\xi^{-l}}{1-q^{-s}\chi(\varpi) \xi^l}=$$
$$\prod_{m=0}^{n-1}(1-q^{-s}\chi(\varpi) \xi^m)^{-1}\epsilon^{-1}(s,\chi, \psi) \times $$ $$ \frac 1 n \sum_{l=0}^{n-1} \Bigl(\ab x \ab^{-lc}\xi^{l(e(\psi)-k)} (1-q^{-1}q^{s}\chi^{-1}(\varpi)\xi^{-l})\! \! \! \! \! \prod_{0 \leq m \leq n-1 \, m\neq l} \! \! \! \! \! \bigl(1-q^{-s}\chi(\varpi) \xi^m\bigr) \Bigr).$$
Since $\xi$ is a primitive element in $\mu_n$, we have the elementary identities
$$\prod_{m=0}^{n-1}(1-q^{-s}\chi(\varpi) \xi^m)=\bigl(1-q^{-ns}\chi^n(\varpi)\bigr)$$ and
$$\prod_{0 \leq m \leq n-1 \, m\neq l} \! \! \!  \bigl(1-q^{-s}\chi(\varpi) \xi^m\bigr)=\sum_{m=0}^{n-1} q^{-ms}\chi^m(\varpi) \xi^{lm}.$$
Thus,
$$\delta_{k,\psi,\chi,s}(x)=\chi(-1)\epsilon^{-1}(s,\chi, \psi)L(\chi^n,ns) \times $$
$$\Biggl( \frac{\bigl(q^{-s}\chi(\varpi)\bigr)^{n-1}}{L(\chi^{-n},1-ns)}\frac 1 n \sum_{l=0}^{n-1} \ab x \ab^{-lc}\xi^{l(e(\psi)-k-1)}+(1-q^{-1})\sum_{m=0}^{n-2}(q^{-s}\chi(\varpi))^m \Bigl(\sum_{l=0}^{n-1} \ab x \ab^{-lc}\xi^{l(e(\psi)-k+m)} \Bigr) \Biggl).$$
Using \eqref{beta as sum for inv} once more we complete the proof.
\end{proof}
We now define
$$\zeta_{n,k}(s,\chi,\widetilde{\phi})=\zeta(s,\chi,\widetilde{\phi}\beta_{n,k})$$
and prove a metaplectic analog to Theorem \ref{few functional equations}.
\begin{thm} \label{meta few functional equations} Suppose that $n$ is even. For $0\leq k<n$ we have

\begin{equation} \label{meta ari}\zeta_{n,k}(s,\chi,\widetilde{\phi})=\sum_{m=0}^{n-1}\widetilde{\theta}_{m}(s,\chi,\psi) \zeta_{n,m+e(\psi,\chi^2)-k}(1-s,\chi^{-1},\phi), \end{equation}
where $\widetilde{\theta}_{m}(s,\chi,\psi)$ is defined as follows. If $\chi$ is unramified then
$$\widetilde{\theta}_{m}(s,\chi,\psi) =\gamma_F^{-1}(\psi_{-1}) \epsilon^{-1}(2s,\chi^{2},{\psi_{_2}}) \epsilon(s+\half,\chi,\psi)  \times $$
$$\begin{cases}  q^{-\half}q^s\ \chi^{-1}(\varpi) & m=n-1; \\ \\ (1-q^{-1})\bigl(q^{-s}\chi(\varpi)\bigr)^{m}L(ns,\chi^n)  & m\in 2\Z; \\ \\ 0 & otherwise. \end{cases}$$
If $\chi$ is ramified but $\chi^2$ is unramified then,

$$\widetilde{\theta}_{m}(s,\chi,\psi) =\gamma_F^{-1}(\psi_{-1}) \epsilon^{-1}(2s,\chi^{2},{\psi_{_2}}) \epsilon(s+\half,\chi,\psi) \bigl(q^{-s}\chi(\varpi)\bigr)^{m-1}L(\chi^n,ns) \times$$
$$\begin{cases}  L^{-1}(1-ns,\chi^{-n}) & m=n-1; \\ \\ (1-q^{-1}) & m\in 1+2\Z, \, m \neq n-1; \\ \\ 0 & otherwise. \end{cases}$$
If $\chi^2$ is ramified then,
$$\widetilde{\theta}_{m}(s,\chi,\psi) =\begin{cases} \gamma_F^{-1}(\psi_{-1}) \chi(-1)\epsilon^{-1}(2s,\chi^{2},{\psi_{_2}}) \epsilon(s+\half, \chi,\psi)  & m=0; \\ \\ 0 & m \neq 0. \end{cases}$$
\end{thm}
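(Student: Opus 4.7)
The plan is to mimic the proof of Theorem~\ref{few functional equations}, replacing Tate's functional equation by the metaplectic functional equation $\zeta(s,\chi,\widetilde{\phi})=\widetilde{\gamma}(1-s,\chi^{-1},\psi)\zeta(1-s,\chi^{-1},\phi)$ recalled in Section~\ref{T and meta T}. Expanding $\beta_{n,k}$ via \eqref{beta as sum for inv} and applying this functional equation to each shifted term, one obtains
$$\zeta_{n,k}(s,\chi,\widetilde{\phi})=\int_{F^*}\widetilde{\delta}_{k,\psi,\chi,s}(x)\phi(x)\chi^{-1}(x)\ab x\ab^{1-s}\,d^*_\psi x,\qquad \widetilde{\delta}_{k,\psi,\chi,s}(x)=\sum_{l=0}^{n-1}\xi^{-kl}\widetilde{\gamma}(1-s-lc,\chi^{-1},\psi)\ab x\ab^{-lc},$$
valid in a common vertical strip of absolute convergence. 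The theorem then reduces to the identity $\widetilde{\delta}_{k,\psi,\chi,s}=\sum_m\widetilde{\theta}_m(s,\chi,\psi)\beta_{n,\,m+e(\psi,\chi^2)-k}$.

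Next I would substitute formula \eqref{meta gama formula},
$$\widetilde{\gamma}(1-s-lc,\chi^{-1},\psi)=\gamma_F^{-1}(\psi_{-1})\chi(-1)\gamma^{-1}(2s+2lc,\chi^2,\psi_2)\gamma(s+\half+lc,\chi,\psi),$$
and extract the $l$-dependence of each $\epsilon$-part by \eqref{epsilon twist} as a pure $\xi$-power shift. Three cases then arise according to the ramification of $\chi$ and $\chi^2$. When $\chi^2$ is ramified both $L$-factors are trivial, and $\widetilde{\delta}_{k,\psi,\chi,s}$ collapses to a single $\beta_{n,\cdot}$; identifying its index as $e(\psi,\chi^2)-k$ hinges on the equality $e(\chi)=e(\chi^2)$, which holds because $p$ is odd (the squaring map is a bijection on $1+\Pf^m$ for every $m\ge 1$, and in the tame case $e(\chi)=1$ ramification of $\chi^2$ forces $e(\chi^2)=1$ as well). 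When $\chi$ is ramified but $\chi^2$ is unramified, $\chi|_{\Of^*}$ must be the unique quadratic character of $\Of^*$, so $e(\chi)=1$ and $e(\chi^2)=0$, and only the $\chi^2$-factor contributes an $L$-ratio (in $\xi^{2l}$). When $\chi$ is unramified, both $\gamma$-factors contribute nontrivial $L$-ratios, one in $\xi^l$ and one in $\xi^{2l}$.

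The main obstacle is the algebraic bookkeeping in the last two cases. Following the template of the previous theorem, I would clear the $L$-denominators using the identities
$$\prod_{m=0}^{n-1}\bigl(1-q^{-s-\half}\chi(\varpi)\xi^m\bigr)=1-q^{-n(s+\half)}\chi^n(\varpi),\qquad \prod_{m=0}^{d-1}\bigl(1-q^{-2s}\chi^2(\varpi)\xi^{2m}\bigr)=1-q^{-ns}\chi^n(\varpi),$$
the second using that $\xi^2$ has order $d=n/2$, which produces the factor $L(ns,\chi^n)$ in the statement. After expanding, reducing exponents modulo $n$ via $\xi^n=1$, interchanging sums, and applying the orthogonality $\sum_{l=0}^{n-1}\xi^{lj}\ab x\ab^{-lc}=n\,\beta_{n,j}(x)$, one reads off $\widetilde{\theta}_m$ in each case. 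The delicate point in the unramified case is a parity phenomenon: the period-$d$ expansion from $\gamma^{-1}(2s+2lc,\chi^2,\psi_2)$ carries only even exponents of $\xi^l$, so when combined with the period-$n$ expansion from $\gamma(s+\half+lc,\chi,\psi)$ the odd-$m$ coefficients cancel, leaving a single boundary term at $m=n-1$ (arising as the leftover after clearing denominators); an analogous parity analysis gives the ``odd $m$'' formula in the mixed case.
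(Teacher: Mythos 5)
Your approach is the paper's own: reduce \eqref{meta ari} to the identity $\widetilde{\delta}_{k,\psi,\chi,s}=\sum_{m}\widetilde{\theta}_{m}(s,\chi,\psi)\,\beta_{n,m+e(\psi,\chi^2)-k}$ by expanding $\beta_{n,k}$ as in \eqref{beta as sum for inv} and applying the metaplectic functional equation termwise, substitute \eqref{meta gama formula}, and split into the three ramification cases. Your treatment of the case $\chi^2$ ramified (via $e(\chi)=e(\chi^2)$, which the paper gets from $1+\Pf\subseteq{\F^*}^2$) and of the mixed case, as well as the parity/telescoping mechanism that leaves only the boundary term at $m=n-1$ in the unramified case, all match the paper's proof, which itself just repeats the argument of Theorem \ref{few functional equations}.

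One concrete step, as written, would not do what you claim, and you should fix it before executing the unramified (and mixed) bookkeeping. The factor $1-\chi(\varpi)q^{-s-\half}\xi^{l}$ is $L(s+\half+lc,\chi)^{-1}$ and therefore sits in the \emph{numerator} of $\gamma(s+\half+lc,\chi,\psi)$; the actual denominator contributed by the Tate factor is $1-\chi^{-1}(\varpi)q^{s-\half}\xi^{-l}$, coming from $L(\half-s-lc,\chi^{-1})$. So your first product identity, $\prod_{m=0}^{n-1}\bigl(1-q^{-s-\half}\chi(\varpi)\xi^m\bigr)=1-q^{-n(s+\half)}\chi^n(\varpi)$, is not a denominator-clearing identity here, and if used as stated it would introduce a factor $1-q^{-n(s+\half)}\chi^{n}(\varpi)$ that never appears in $\widetilde{\theta}_m$. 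The correct (and the paper's implicit) move is to cancel first: the reciprocal of $L(1-2s-2lc,\chi^{-2})$ coming from $\gamma^{-1}(2s+2lc,\chi^{2},\psi_2)$ equals $1-\chi^{-2}(\varpi)q^{2s-1}\xi^{-2l}=\bigl(1-\chi^{-1}(\varpi)q^{s-\half}\xi^{-l}\bigr)\bigl(1+\chi^{-1}(\varpi)q^{s-\half}\xi^{-l}\bigr)$, and its first factor kills the Tate denominator. After this cancellation the only remaining denominator is $1-\chi^{2}(\varpi)q^{-2s}\xi^{2l}$, so only your second (period-$d$) identity is needed; it produces $L(ns,\chi^n)$, and the rest of your expansion, exponent reduction mod $n$, and orthogonality argument then goes through exactly as in the paper, with the odd-exponent terms cancelling telescopically except at $m=n-1$.
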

\begin{proof} It is sufficient again to prove this result for $s$ such that all Mellin transforms in \eqref{meta ari} are given by absolutely convergent integrals. Similar to the proof of Theorem \ref{few functional equations} we have
$$\zeta_{n,k}(s,\chi,\widetilde{\phi})=\int_{F^*} \widetilde{\delta}_{k,\psi,\chi}(x)\phi(x) \chi^{-1}(x) \ab x \ab^{1-s} d^*_\psi x,$$
where $$\widetilde{\delta}_{k,\psi,\chi,s}(x)=\sum_{l=0}^{n-1} \xi^{-kl}  \widetilde{\gamma}\bigl(1-(s+lc),\chi^{-1},\psi \bigr)\ab x \ab^{-lc}.$$
We now continue using \eqref{meta gama formula}. Note that since $n$ is even, $F$ is of odd residual characteristic. This implies that $e(\psi_2)=e(\psi)$. Suppose first that $\chi^2$ is ramified. We claim that this implies that $e(\chi)=e(\chi^2)$. Indeed, if $e(\chi)=1$ then there is nothing to proof. If $e(\chi)>1$ this equality follows from the fact that $1+\Pf \subseteq  {F^*}^2$. Thus,
$$\widetilde{\delta}_{k,\psi,\chi,s}(x)=\gamma_F^{-1}(\psi_{-1}) \chi(-1)\epsilon^{-1}(2s,\chi^{2},{\psi_{_2}}) \epsilon(s+\half,\chi,\psi)\sum_{l=0}^{n-1} \xi^{l\bigl(e(\psi,\chi)-k\bigr)}\ab x \ab^{-lc}.$$
By arguments we used already in the proof of Theorem \ref{aritur} we are done in this case. Suppose now that $\chi$ is unramified. We have
$$\widetilde{\gamma}\bigl(1-(s+lc),\chi^{-1},\psi \bigr)=\gamma_F^{-1}(\psi_{-1}) \epsilon^{-1}(2s,\chi^{2},{\psi_{_2}}) \epsilon(s+\half,\chi,\psi) \xi^{le(\psi)} \frac {(1-q^{-1})+q^{-\half}(q^{s}\chi^{-1}\bigl(\varpi)-q^{-s}\chi(\varpi)\bigr)}{1-q^{-2s}\chi^2(\varpi)\xi^{2l}}.$$
One now repeats similar arguments to those used in the unramified case in Theorem \ref{few functional equations}. In the course of the computation one uses the fact that since $\xi^2$ is a primitive element in $\mu_d$ we have
$$\prod_{m=0}^{n-1}(1-q^{-2s}\chi^2(\varpi) \xi^{2m})=\bigl(1-q^{-ns}\chi^n(\varpi)\bigr)^2$$ and
$$\prod_{0 \leq m \leq n-1 \, m\neq l} \! \! \!  \bigl(1-q^{-2s}\chi^2(\varpi) \xi^m\bigr)=\bigl(1-q^{-ns}\chi^n(\varpi)\bigr)\sum_{m=0}^{d-1} q^{-2ms}\chi^{2m}(\varpi) \xi^{2lm}.$$
This ultimately gives
$$\widetilde{\delta}_{k,\psi,\chi,s}(x)=\gamma_F^{-1}(\psi_{-1}) \chi(-1)\epsilon^{-1}(2s,\chi^{2},{\psi_{_2}}) \epsilon(s+\half,\chi,\psi) \times$$
$$ \Bigl( q^{-\half}q^s\ \chi^{-1}(\varpi)\beta_{n-1-k+e(\psi)}(x)+(1-q^{-1})L(ns,\chi^n)\sum_{m=0}^{d-1} \bigl(q^{-s}\chi(\varpi)\bigr)^{m} \beta_{2m-k+e(\psi)}(x)\Bigr).$$
If $\chi$ is ramified but $\chi^2$ is unramified then
$$\widetilde{\gamma}\bigl(1-(s+lc),\chi^{-1},\psi \bigr)=\gamma_F^{-1}(\psi_{-1}) \chi(-1) \epsilon^{-1}(2s,\chi^{2},{\psi_{_2}}) \epsilon(s+\half,\chi,\psi) \xi^{(\bigl(le(\psi)-1\bigr)} \frac {1-q^{-1}q^{2s}\chi^{-2}(\varpi)\xi^{2l}}{1-q^{-2s}\chi^2(\varpi)\xi^{-2l}}.$$
Repeating the same arguments as above we obtain
$$\widetilde{\delta}_{k,\psi,\chi,s}(x)=\gamma_F^{-1}(\psi_{-1}) \chi(-1)\epsilon^{-1}(2s,\chi^{2},{\psi_{_2}}) \epsilon(s+\half,\chi,\psi)L(ns,\chi^n) \times $$
$$\Bigl( \bigl( q^{-s}\chi(\varpi) \bigr)^{n-2}L^{-1}(1-ns,\chi^{n})\beta_{n,e(\psi)-k+(n-1)}+(1-q^{-1})\sum_{m=0}^{d-1}\bigl( q^{-s}\chi(\varpi) \bigr)^{2m}\beta_{n,e(\psi)-k+(2m-1)} \bigr).$$

\end{proof}

\section{$n$ fold cover of $SL_2(F)$}
\subsection{Construction of the cover}
Let ${SL_2(F)}$ be the group of two by two matrices with entries in $F$ whose determinant is 1. Let $N(F) \simeq F$ be the group of upper triangular unipotent matrices. Let $H(F)\simeq F^*$ be the group of diagonal elements inside  $SL_2(F)$. Denote $B(F)=H(F) \ltimes N(F)$. For $x \in F$, and  $a\in F^*$ we shall write
$$n(x)=\begin{pmatrix} _{1} & _{x}\\_{0} & _{1 }
\end{pmatrix}, \quad  h(a)=\begin{pmatrix} _{a} & _{0}\\_{0} & _{a^{-1} }
\end{pmatrix}, \quad  w=\begin{pmatrix} _{0} & _{1}\\_{-1} & _{0}
\end{pmatrix}.$$
Let $\widetilde{SL_{2}(F)}$ be the topological central extension of $\slt$ by $\mu_n$ constructed using the Kubota cocylce, \cite{Kub}. More precisely, we realize  $\widetilde{SL_2(F)}$ as the set $SL_2(F) \times \mu_n$ along with the multiplication
$$\bigl(g,\epsilon \bigr)\bigl(g',\epsilon \bigr)=\bigl(gg',c(g,g')\epsilon \epsilon'\bigr),$$
where
\begin{equation}\label{rao}c(g_1,g_2)=\bigl(x(g_1g_2)x^{-1}(g_1),x(g_1g_2)x^{-1}(g_2)\bigr).\end{equation} Here
$$x \begin{pmatrix} _{a} & _{b}\\_{c} &
_{d}\end{pmatrix}=\begin{cases} c & c \neq 0; \\ d & c=0.
\end{cases}$$

We shall denote by $s$ the map from  ${SL_{2}(F)}$ to $\widetilde{SL_{2}(F)}$ given by $s(g)=(g,1)$. For a subset $A$ of $SL_{2}(F)$ we shall denote by $\widetilde{A}$ its primage in $\widetilde{SL_{2}(F)}$.
\subsection{Representations of the Cartan subgroup} \label{car rep}
From \eqref{rao} it follows that $$c \bigl(h(a),h(b) \bigr)=(b,a).$$ This implies that $s(h(a))$ and $s(h(b))$ commute if and only if $(b,a)^2=1$. Let $\widetilde{H_0(F)}$ be the center of $\widetilde{H(F)}$. One immediately sees  that
$$\widetilde{H_0(F)}= \widetilde{H^d(F)}.$$
Define now
$$H_d(F)=\{h(a) \mid a\in F^*_d \}.$$

From Lemmas \ref{kernal p n 1} and \ref{split hilbert} it  follows that $\widetilde{H_d(F)}$ is a maximal Abelian subgroup of $\widetilde{H(F)}$. It is an analog to Kazhdan-Patterson's standard maximal Abelian subgroup, see \cite{KP}. Observe that if $n$ is odd then $c \bigl(H_d(F),H_d(F) \bigr)=\{1\}$. This means that $\mslt$ splits over $H_d(F)$ via the trivial section. However, if $n$ is even then $c \bigl(H_d(F),H_d(F) \bigr)=\mu_2$.

A representation of $\mslt$ or any of its subgroups is called genuine if $(I_2,\epsilon)$ acts by $\epsilon$. By Lemma \ref{split hilbert}

$$\bigl(h(a),\epsilon \bigr) \mapsto \epsilon \xi_{\varpi,\psi}(a)$$ is a genuine character of $\widetilde{H_d(F)}$. Since the quotient of two genuine characters of $\widetilde{H_d(F)}$ is a character which factors through the projection to $H_d(F)$ it follows that any genuine character of $\widetilde{H_d(F)}$ is given by
$$\bigl(h(a),\epsilon\bigr) \mapsto \chi_{\varpi,\psi}\bigl(h(a),\epsilon\bigr)=\epsilon \chi(a) \xi_{\varpi,\psi}(a)$$ where $\chi$ is a character of $F^*_d$.

Remark: Suppose that $n$ is even. If $-1 \in {F^*}^2$ then we may think of  $\xi_{\varpi,\psi}$ as a map into $\mu_n$. In this case
$\bigl(h(a),\epsilon \bigr) \mapsto \bigl(h(a),\epsilon \xi_{\varpi,\psi}  \bigr)$ defines an isomorphism from $\widetilde{H_d(F)}$ to $H_d(F) \times \mu_n$. We shall not use this fact since the parametrization given above of the genuine characters of $\widetilde{H_d(F)}$ is sufficient for our purposes.

\begin{lem} \label{cartan rep} Any genuine smooth admissible irreducible representation of $\widetilde{H(F)}$ may be realized as
$$i\bigl(\chi_{\varpi,\psi}\bigr)=\operatorname{Ind}_{\widetilde{H_d(F)}}^{\widetilde{H}} \chi_{\varpi,\psi}.$$
The representations $i\bigl(\chi_{\varpi,\psi}\bigr)$ and $i\bigl(\chi'_{\varpi,\psi}\bigr)$ are isomorphic if and only if
$$\chi'=\chi\eta_\varpi^{2m}$$ for some integer $m$. 
\end{lem}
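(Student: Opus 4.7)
The plan is a Stone--von Neumann / Mackey-theoretic argument for the two-step nilpotent group $\widetilde{H(F)}$ relative to the maximal abelian, normal, finite-index subgroup $\widetilde{H_d(F)}$, whose quotient $\widetilde{H(F)}/\widetilde{H_d(F)}\simeq H(F)/H_d(F)$ is cyclic of order $d$. The first step is a cocycle computation: using $c(h(a),h(b))=(b,a)$ from \eqref{rao}, bilinearity of the Hilbert symbol, and the identities $(x,y)(y,x)=1=(x,-x)$, one computes $(h(a),1)^{-1}=(h(a^{-1}),(-1,a))$ and then
$$
(h(a),1)\,(h(b),\epsilon)\,(h(a),1)^{-1}=\bigl(h(b),\,(a,b)^{-2}\epsilon\bigr).
$$
Hence for any $a\in F^*$ the conjugate character $\chi_{\varpi,\psi}^{h(a)}$ equals $\chi_{\varpi,\psi}\cdot\eta_a^{-2}$ when restricted to $\widetilde{H_d(F)}$.

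Next I would deduce irreducibility of $i(\chi_{\varpi,\psi})$ from Mackey's criterion. If $h(a)$ with $a\notin F^*_d$ stabilized $\chi_{\varpi,\psi}$, then $(a,b)^2=1$ for all $b\in F^*_d$, so by the previous step $(h(a),1)$ would commute pointwise with every element of $\widetilde{H_d(F)}$, contradicting the maximality of $\widetilde{H_d(F)}$ as an abelian subgroup (recorded in the paragraph preceding the lemma). Therefore the stabilizer of $\chi_{\varpi,\psi}$ is exactly $\widetilde{H_d(F)}$, so $i(\chi_{\varpi,\psi})$ is irreducible of dimension $d$.

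For exhaustion, let $\pi$ be any genuine smooth admissible irreducible representation of $\widetilde{H(F)}$. The center $\widetilde{H_0(F)}$ acts through a central character by admissibility and Schur's lemma, and since $\widetilde{H(F)}/\widetilde{H_0(F)}$ is a finite abelian group the representation $\pi$ is automatically finite-dimensional. Its restriction to the abelian subgroup $\widetilde{H_d(F)}$ therefore decomposes as a direct sum of genuine characters; picking any such $\chi_{\varpi,\psi}$ occurring in it, Frobenius reciprocity produces a non-zero embedding $\pi\hookrightarrow i(\chi_{\varpi,\psi})$, and irreducibility of both sides forces $\pi\cong i(\chi_{\varpi,\psi})$. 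The isomorphism criterion then follows once more from Mackey: $i(\chi)\cong i(\chi')$ iff $\chi'$ lies in the $\widetilde{H(F)}$-orbit of $\chi$, and by the first step that orbit equals $\{\chi\cdot\eta_\varpi^{-2m}:m\in\Z\}$ since $h(\varpi)$ generates $H(F)/H_d(F)$; replacing $m$ by $-m$ gives precisely the stated form $\chi'=\chi\eta_\varpi^{2m}$.

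The main technical obstacle is the cocycle bookkeeping in the very first step: one must correctly identify the central correction $(-1,a)$ arising when inverting $(h(a),1)$ in the Kubota extension and combine it with the factors $(a^{-1},ab)$ and $(b,a)$ from the two subsequent multiplications, so that the clean twist $(a,b)^{-2}$ actually emerges. Once this is in hand, the remaining steps reduce to routine Mackey and Clifford theory for a finite-index abelian normal subgroup and require no further input.
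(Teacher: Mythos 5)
Your argument is correct, and it is genuinely different from the paper's. The paper disposes of the lemma by quoting a Stone--von Neumann type theorem (Theorem 3 of \cite{Mc}): genuine irreducibles of $\widetilde{H(F)}$ are parameterized by their central characters on $\widetilde{H_0(F)}$ and are realized by inducing any extension of the central character to a maximal abelian subgroup; the isomorphism criterion is then obtained by counting the $d=[\widetilde{H_d(F)}:\widetilde{H_0(F)}]$ extensions of a fixed central character and observing that these are exactly the twists $(\chi\eta_\varpi^{2m})_{\varpi,\psi}$. You instead run Clifford--Mackey theory directly for the open normal finite-index abelian subgroup $\widetilde{H_d(F)}$: your cocycle computation $(h(a),1)(h(b),\epsilon)(h(a),1)^{-1}=(h(b),(a,b)^{-2}\epsilon)$ is correct (the central correction $(-1,a)$ in $(h(a),1)^{-1}$ cancels against $(a^{-1},ab)=(-1,a)(a,b)^{-1}$), so conjugation twists $\chi_{\varpi,\psi}$ by $\eta_a^{\mp 2}$; the stabilizer argument via maximal abelianness of $\widetilde{H_d(F)}$ (which the paper records just before the lemma) gives irreducibility; finite-dimensionality from the central character plus $[\widetilde{H}:\widetilde{H_0}]<\infty$, semisimplicity of the restriction, and Frobenius reciprocity give exhaustion; and the Mackey orbit criterion, with $h(\varpi)$ generating $H(F)/H_d(F)$, gives the isomorphism statement, the sign on $m$ being immaterial. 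What your route buys is self-containedness -- no appeal to the cited Stone--von Neumann result -- and it makes explicit the conjugation twist by $\eta_a^{2}$, which is the structural reason the orbit is $\{\chi\eta_\varpi^{2m}\}$ in both treatments; what the paper's route buys is brevity and the explicit parameterization of irreducibles by central characters of $\widetilde{H_0(F)}$, which is reused later (e.g.\ in Theorem \ref{savin}, where reducibility is read off the projection of the central character to $H^d(F)$). If you wanted to tighten one point, justify in a line that a simple $\widetilde{H_d(F)}$-submodule of the finite-dimensional restriction exists and is one-dimensional (Schur for the abelian group), which is what licenses "decomposes as a direct sum of genuine characters."
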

\begin{proof} By a variation of  Stone-von Neumann Theorem, see Theorem 3 of \cite{Mc}, the genuine smooth admissible irreducible representations of $\widetilde{H(F)}$ are parameterized by genuine characters of  $\widetilde{H_{_0}}$. A realization of  a genuine smooth admissible irreducible representations $\tau$ of $\widetilde{H(F)}$ whose central character is $\chi_{\tau}$ is constructed by first extending $\chi_{\tau}$ to a character of a maximal Abelian subgroup of $\widetilde{H(F)}$ and then by inducing. This proves the first assertion. To prove the second assertion note that any character of $\widetilde{H_{_0}}$ has exactly $[\widetilde{H_d(F)}:\widetilde{H_{_0}}]=d$ extensions to $\widetilde{H_d(F)}$ and that given a genuine character $\chi_{\varpi,\psi}$ of $\widetilde{H_d(F)}$, the set
$$\{(\chi \eta_\varpi^{2m})_{\varpi,\psi} \mid m=0,1,\cdots, d-1 \}$$ consists of $d$ characters of $\widetilde{H_d(F)}$ whose restriction to $\widetilde{H_{_0}}$ are equal. 
\end{proof}

Given a complex parameter $s$ and a genuine character $\chi_{\varpi,\psi}$ of $\widetilde{H_d(F)}$ we define another genuine character  $\bigl(\chi_{\varpi,\psi},s \bigr)$ of $\widetilde{H_d(F)}$
by $$g=(a,\epsilon) \mapsto \chi_{\varpi,\psi}(g)\ab a \ab^s.$$ Note that  $\chi_{\varpi,\psi}=\bigl(\chi_{\varpi,\psi},0 \bigr).$ Similarly, given a genuine smooth admissible irreducible representation $\tau$ of $\widetilde{H(F)}$ whose central character is $\chi_\tau$  and a complex parameter $s$ we define $\tau_s$ to be the
genuine smooth admissible irreducible representation of  $\widetilde{H(F)}$ whose central character is
$$(x,\epsilon)\mapsto  \chi_\varpi(a,\epsilon) \ab a \ab^s.$$
Observe that if $\tau \simeq  i\bigl(\chi_{\varpi,\psi}\bigr)$ then $\tau_s \simeq i\bigl(\chi_{\varpi,\psi},s \bigr)$.

\subsection{Genuine principal series representations and Whittaker functionals} \label{prin whi}
$\mslt$ splits over $N(F)$ via the trivial section and  $\widetilde{H(F)}$ normalizes $s\bigl( N(F) \bigr)$. Therefore, any representation of $\widetilde{H(F)}$
can be extended to a representation of $\widetilde{B(F)}$ by defining it to be trivial on  $s\bigl( N(F) \bigr)$. Similar to the linear case we shall identify the representations of $\widetilde{B(F)}$ and  $\widetilde{H(F)}$. A genuine principal series representation of $\mslt$ is defined to be a representation parabolically induced from a genuine smooth admissible representation of  $\widetilde{B(F)}$. Thus, using induction in stages, any genuine principal series representation of $\mslt$ may be realized as
$$I\bigl(\chi_{\varpi,\psi},s\bigr)=\operatorname{Ind}_{\widetilde{B_d(F)}}^{\mslt} \bigl(\chi_{\varpi,\psi},s \bigr).$$
where $B_d(F)=H_d(F)N(F)$.  We shall assume that parabolic inductions are normalized. 

Let $\psi'$ be (another) non-trivial character $F$. A $\psi'$-Whittaker functional $\lambda$ on  a representation $(\pi,V)$ of $\mslt$ is a functional on $V$ which satisfies
$$\lambda \circ \pi\bigl(s(x)\bigr)=\psi'(x)\lambda.$$
Let $Wh_{\psi'}\bigl(\chi_{\varpi,\psi},s \bigr)$  be the space of $\psi'$ Whittaker functionals on $I\bigl(\chi_{\varpi,\psi},s\bigr)$. Unlike the linear case the dimension of this space is not 1. Arguing exactly as in Lemmas 1.3.1 and 1.3.2 of \cite{KP} we have
\begin{lem} Let $\bigl(\chi_{\varpi,\psi},s \bigr)$ be a genuine character of $\widetilde{H_d(F)}$.\\
1. $\dim Wh_{\psi'}\bigl(\chi_{\varpi,\psi},s \bigr)=[\widetilde{H}:\widetilde{H_d(F)}]=d.$\\ \\
2. If  $\ab \chi\bigl(\varpi^d) \ab <q^{Re(s)d}$ then for any $h \in \widetilde{H}$ and $f \in I\bigl(\chi_{\varpi,\psi},s\bigr)$ the integral
$$\int_F f\bigl(hs(wn(x))\bigr)\psi^{-1}(x) d_{\psi'}x$$ converges absolutely to a polynomial in $q^{-s}$. Moreover, for all $s$,
$$\lim_{r \rightarrow \infty}  \int_{\Pf^{-r}} f\bigl(hs(wn(x))\bigr)\psi^{-1}(x) d_{\psi'}x$$
exists.\\ \\
3. Let $\lambda_{h,\psi',\chi_{\varpi,\psi},s}(f)$ denote the analytic continuation of the integral defined in Part 2. The map $f \mapsto \lambda_{h,\psi',\chi_{\varpi,\psi},s}(f)$ is a $\psi'$ Whittaker functional on $I\bigl(\chi_{\varpi,\psi},s\bigr)$.\\ \\
4. Let $A$ be a set of representatives of $\widetilde{H} / \widetilde{H_d(F)}$. The set
$$\{\lambda_{h,\psi',\chi_{\varpi,\psi},s} \mid h \in A \}$$ is a basis for  $Wh_{\psi'}\bigl(\chi_{\varpi,\psi},s \bigr)$.
\end{lem}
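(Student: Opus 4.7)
The plan is to follow the standard template for parametrizing Whittaker functionals on principal series by twisted Jacquet module computations and open-cell integrals, adapted to the metaplectic setting where one induces from the index $d$ subgroup $\widetilde{H_d(F)}$ rather than from $\widetilde{H(F)}$. In brief, I would establish the four parts in the following order: (1) via Frobenius reciprocity / Bruhat filtration to get the dimension count; (2) by Iwasawa decomposition and a standard splitting of the $x$-integral to establish convergence and the regularized limit; (3) by a direct change of variables using that the cocycle is trivial on $N(F)$; (4) by constructing explicit test sections supported near distinct Bruhat cells to separate the proposed basis.

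For Part 1, I would invoke the geometric lemma. Restricting a functional in $Wh_{\psi'}\bigl(\chi_{\varpi,\psi},s\bigr)$ to the open cell $\widetilde{B_d(F)}\cdot s(w)\cdot s(N(F))$ gives, by Frobenius reciprocity,
\[
Wh_{\psi'}\bigl(\chi_{\varpi,\psi},s\bigr)\;\cong\;\mathrm{Hom}_{\widetilde{H_d(F)}}\!\bigl(\mathrm{ind}_{\widetilde{H_d(F)}}^{\widetilde{H(F)}}\mathbb{C},\,(\chi_{\varpi,\psi})^{w}\otimes\delta^{1/2}\bigr),
\]
where the closed cell contributes zero since $\psi'$ is non-trivial on $N(F)$. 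The right-hand side has dimension $[\widetilde{H(F)}:\widetilde{H_d(F)}]=d$, matching the fact that a central character has exactly $d$ extensions to $\widetilde{H_d(F)}$ as recorded in Lemma \ref{cartan rep}.

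For Part 2, I would use the Iwasawa decomposition of $wn(x)$ for $x\neq 0$, which realizes $wn(x)$ modulo $s(N(F))$ as $h(-x^{-1})$ times a bounded factor. Combined with $\delta_{B}^{1/2}\bigl(h(a)\bigr)=|a|$ and the inducing character, one obtains $\bigl|f\bigl(h\cdot s(wn(x))\bigr)\bigr|\le C\,|x|^{-1-\mathrm{Re}(s)}\cdot\bigl|\chi(x^{-1})\bigr|$ for large $|x|$, which yields absolute convergence under the hypothesis $|\chi(\varpi^d)|<q^{\mathrm{Re}(s)d}$ once one decomposes $F^{*}$ into the $d$ cosets of $F^{*}_{d}$ (on which $\chi$ is defined) and uses Lemma \ref{split porp} to control $\xi_{\varpi,\psi}$. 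For the meromorphic continuation, I would split $\int_{\mathbb{P}_F^{-r}}=\int_{|x|\le q^{M}}+\int_{q^{M}<|x|\le q^{r}}$: the first piece is absolutely convergent for all $s$ by compactness, while on the second piece $f\bigl(h\cdot s(wn(x))\bigr)$ becomes a finite $\mathbb{C}$-linear combination of terms of the form $\chi(x^{-1})\xi_{\varpi,\psi}(x^{-1})|x|^{-1-s}\cdot f^{\circ}$ with $f^{\circ}$ locally constant. Summing the resulting geometric series in $q^{-s}$ (truncated against $\psi'(x)^{-1}$) stabilizes once $M$ exceeds the conductors of $f$ and $\psi'$, proving that the limit exists and is polynomial in $q^{-s}$ in the stated region.

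For Part 3, since $s\bigl(N(F)\bigr)$ is a subgroup of $\mslt$ and $s(wn(x))\,s(n(y))=s(wn(x+y))$, a straightforward change of variables $u=x+y$ gives $\lambda_{h,\psi',\chi_{\varpi,\psi},s}\bigl(\pi(s(n(y)))f\bigr)=\psi'(y)\lambda_{h,\psi',\chi_{\varpi,\psi},s}(f)$ for $\mathrm{Re}(s)$ large; the identity persists on the meromorphic continuation. For Part 4, the dimension bound from Part 1 reduces the claim to linear independence of the $d$ functionals indexed by $A$. For each $h_{0}\in A$ I would produce a test section $f_{h_{0}}\in I(\chi_{\varpi,\psi},s)$ supported in a small neighborhood of $\widetilde{B_d(F)}\cdot h_{0}\cdot s(w)\cdot s\bigl(n(U)\bigr)$ with $U$ a sufficiently small neighborhood of $0$ in $F$; then $\lambda_{h,\psi',\chi_{\varpi,\psi},s}(f_{h_{0}})$ is non-zero precisely when $h\in h_{0}\widetilde{H_d(F)}$, which detects each coset of $\widetilde{H}/\widetilde{H_d(F)}$ separately. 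The main obstacle is Part 2: extracting the polynomial bound and the limit requires carefully tracking the interplay between the coset decomposition of $F^{*}$ modulo $F^{*}_{d}$, the section $\xi_{\varpi,\psi}$ splitting the Hilbert symbol, and the conductor of $\psi'$, all of which have no analogue in the linear case treated by Kazhdan-Patterson.
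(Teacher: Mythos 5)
Your route is essentially the paper's own: the paper offers no independent argument for this lemma, saying only that one argues ``exactly as in Lemmas 1.3.1 and 1.3.2 of \cite{KP}'', and what you sketch --- a Bruhat filtration in which the closed cells die because $\psi'$ is non-trivial on $N(F)$ and the open-cell contributions give the dimension, absolute convergence and stabilization of the open-cell integral for $|\chi(\varpi^d)|<q^{\operatorname{Re}(s)d}$, equivariance via $x\mapsto x+y$ (legitimate since the Kubota cocycle is trivial on $s\bigl(N(F)\bigr)$ and on products $s(wn(x))s(n(y))$), and linear independence via sections supported in a single cell, which are exactly the $f_{m,j}$-type sections the paper builds later in Section \ref{meta sha} --- is precisely the Kazhdan--Patterson argument being invoked.

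One step of your Part 1 does need repair, although the method is unaffected. The displayed identification
$$Wh_{\psi'}\bigl(\chi_{\varpi,\psi},s\bigr)\;\cong\;\operatorname{Hom}_{\widetilde{H_d(F)}}\Bigl(\operatorname{ind}_{\widetilde{H_d(F)}}^{\widetilde{H(F)}}\mathbb{C},\,(\chi_{\varpi,\psi})^{w}\otimes\delta^{1/2}\Bigr)$$
cannot be correct as written: since $\widetilde{H_d(F)}$ is normal of index $d$ in $\widetilde{H(F)}$, the restriction of $\operatorname{ind}_{\widetilde{H_d(F)}}^{\widetilde{H(F)}}\mathbb{C}$ to $\widetilde{H_d(F)}$ is a sum of $d$ copies of the trivial character, which is non-genuine, while $(\chi_{\varpi,\psi})^{w}\otimes\delta^{1/2}$ is genuine; so the right-hand side is $0$, not $d$-dimensional. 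The correct bookkeeping is either (i) to work with $B_d(F)=H_d(F)N(F)$, in which case $SL_2(F)$ breaks into $d$ closed-type cells $B_d(F)h(\varpi^i)N(F)$ (killed by the non-trivial $\psi'$) and $d$ open-type cells $B_d(F)h(\varpi^i)wN(F)$, $0\le i\le d-1$, each open cell contributing exactly one dimension to the twisted Jacquet module --- this is also what makes the basis of Part 4 naturally indexed by the set $A$ of coset representatives of $\widetilde{H}/\widetilde{H_d(F)}$; or (ii) to induct in stages, writing $I\bigl(\chi_{\varpi,\psi},s\bigr)=\operatorname{Ind}_{\widetilde{B(F)}}^{\mslt}i\bigl(\chi_{\varpi,\psi}\bigr)_s$ with the $d$-dimensional inducing representation $i\bigl(\chi_{\varpi,\psi}\bigr)$ of Lemma \ref{cartan rep}, so that there is a single open cell whose $(N,\psi')$-coinvariants are isomorphic to the underlying $d$-dimensional space. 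With either correction your dimension count, and the remainder of your argument (Parts 2--4), goes through as in \cite{KP}.
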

We shall now fix once and for all the set

$$A=\{ s(\varpi^i) \mid i=0,1,\cdots d-1 \}.$$
as a set of representatives of $\widetilde{H} / \widetilde{H_d(F)}$ and we shall write $$\lambda_{i,\psi',\chi_{\varpi,\psi},s}=\lambda_{ s(\varpi^i),\psi',\chi_{\varpi,\psi},s}.$$
\begin{remark} \label{last remark} If we drop the assumption that the residual characteristic of $F$ is prime to $n$,  $\widetilde{H^d(F)}$ is still the center of $\widetilde{H(F)}$ but $\widetilde{H_d(F)}$ is not always Abelian. However, one can verify at once that the inverse image of
$$ \{h(x^d\varpi^k) \mid x\in F^*, \, k \in \Z \}$$ inside $\msl$ is always an Abelian subgroup of $\widetilde{H(F)}$. The index of this  subgroup is $[{\Of^*}^d:\Of^*]$. This index is an upper bound for the dimension of the irreducible genuine smooth admissible representations of $\widetilde{H(F)}$ and for  the dimension of the space of Whittaker functionals on a genuine principal series representations of $\msl$.
\end{remark}
\section{Metaplectic Shahidi local coefficients} \label{meta sha}
\subsection{Definition}
Fix  a genuine character $\chi_{\varpi,\psi}$ of $\widetilde{H_d(F)}$. If $\ab \chi\bigl(\varpi^d) \ab <q^{Re(s)d}$ then for any $f \in I\bigl(\chi_{\varpi,\psi},s\bigr)$ the integral
$$\int_F f\bigl(s(w)s(n(x))g \bigr) d_{\psi'}x$$ converges absolutely to a rational function in $q^{-s}$. We shall denote its meromorphic continuation by  $A_{w}\bigl(\chi_{\varpi,\psi},s\bigr)(f)$. Away from its poles,

$$A_{w}\bigl(\chi_{\varpi,\psi},s\bigr):I(\chi_{\varpi,\psi},s\bigr) \rightarrow I(\chi^{-1}_{\varpi,\psi},-s\bigr)$$ is a well defined $\mslt$ map. Define now

$$\lambda^w_{i,\psi',\chi_{\varpi,\psi},s}=\lambda_{i,\psi',\chi^{-1}_{\varpi,\psi},-s}\circ A_{w}\bigl(\chi_{\varpi,\psi},s\bigr).$$
We have
$$\lambda^w_{i,\psi',\chi_{\varpi,\psi},s}=\sum_{j=0}^d \tau(i,j,\chi_{_{\varpi,\psi}},s,\psi')\lambda_{j,\psi',\chi_{\varpi,\psi},s},$$
where the functions $\tau(i,j,\chi_{_{\varpi,\psi}},s,\psi')$ are rational in $q^{-s}$.  Define now $D(\chi_{_{\varpi,\psi}},s,\psi')$ to be the $d \times d$ matrix  whose entries are $\tau(i,j,\chi_{_{\varpi,\psi}},s,\psi')$. It is a higher dimensional metaplectic analog to Shahidi local coefficients. Precisely, if $n=1$ then $D(\chi_{_{\varpi,\psi}},s,\psi')=\tau(0,0,\chi,s,\psi')$ is the inverse of the local coefficient defined by Shahidi in \cite{Sha 1}. For the metaplectic $n=2$  case see \cite{Sz 2}.
\subsection{Reduction to an integral}
\begin{lem}  If $\ab \chi\bigl(\varpi^d) \ab <q^{Re(s)d}$ then
\begin{equation} \label{local as int again }\tau(i,j,\chi_{_{\varpi,\psi}},s,\psi')=q^{j-i} (\chi(\varpi)q^{-s})^{-i-j} \times$$ $$ \lim_{r \rightarrow \infty} \int_{F^*_{d,i+j}\cap \Pf^{-r}}  \ab z \ab^s \chi(z)\eta_\varpi(z)^{i-j}\xi_{\varpi,\psi}\bigl(\varpi^{-i-j}z\bigr)\psi'(z)  \, d_{\psi'}^*z .\end{equation}

\end{lem}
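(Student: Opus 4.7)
The plan is to unfold $\lambda^w_{i,\psi',\chi_{\varpi,\psi},s}(f)$ as an iterated integral over $(y,x)\in F\times F$ and to extract the coefficient of each basis Whittaker functional $\lambda_{j,\psi',\chi_{\varpi,\psi},s}$ by partitioning the $y$-integration according to the valuation class of $y$ modulo $d$. Throughout I work in the half-plane $\ab \chi(\varpi^d)\ab <q^{\operatorname{Re}(s)d}$, so that the intertwining and Whittaker integrals are both absolutely convergent and one can interchange the order of integration; the general case then follows by meromorphic continuation.

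First I unfold
$$\lambda^w_i(f) = \lim_{r\to\infty}\int_{\Pf^{-r}}\int_F f\bigl(s(w)s(n(y))s(\varpi^i)s(wn(x))\bigr)\psi'^{-1}(x)\,d_{\psi'}y\,d_{\psi'}x.$$
A direct matrix computation gives the Bruhat decomposition $wn(y)h(\varpi^i)wn(x) = n(-y^{-1})\,h(-\varpi^i y^{-1})\,w\,n(x-\varpi^{2i}y^{-1})$, valid for $y\neq 0$, and the Kubota cocycle formula then expresses $s(w)s(n(y))s(\varpi^i)s(wn(x))$ as $s(n(-y^{-1}))\,s(h(-\varpi^i y^{-1}))\,s(wn(x-\varpi^{2i}y^{-1}))$ multiplied by an explicit product of Hilbert symbols of the shape $(\varpi,-1)$ and $(\varpi,-y)$.

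Next I decompose $F^* = \bigsqcup_{k=0}^{d-1} F^*_{d,k}$ in the $y$-variable. For $y\in F^*_{d,k}$ the diagonal entry $-\varpi^i y^{-1}$ lies in $F^*_{d,(i-k)\bmod d}$; setting $j=(i-k)\bmod d$ I factor $h(-\varpi^i y^{-1}) = h(u')\,h(\varpi^j)$ with $u' = -\varpi^{i-j}y^{-1}\in F_d^*$, producing one further symbol. Since $s(n(-y^{-1}))s(h(u'))$ lies in $\widetilde{B_d(F)}$, the induced-representation transformation law extracts the scalar $\ab u'\ab^{s+1}\chi(u')\xi_{\varpi,\psi}(u')$ and leaves a copy of $f\bigl(s(h(\varpi^j))s(w)s(n(x-\varpi^{2i}y^{-1}))\bigr)$. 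Translating $x\mapsto x+\varpi^{2i}y^{-1}$ detaches the extra factor $\psi'^{-1}(\varpi^{2i}y^{-1})$ and converts the inner integral exactly into $\lambda_{j,\psi',\chi_{\varpi,\psi},s}(f)$; hence the coefficient of $\lambda_j(f)$ is an integral in $y$ over $F^*_{d,(i-j)\bmod d}$.

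Finally I substitute $z=-\varpi^{2i}y^{-1}$, which transforms the range to $F^*_{d,i+j}$, sends $\psi'^{-1}(\varpi^{2i}y^{-1})\mapsto\psi'(z)$ and $u'\mapsto \varpi^{-i-j}z$, and contributes the Jacobian $d_{\psi'}y = q^{-2i}\ab z\ab^{-1}d^*_{\psi'}z$. A bookkeeping of $q$-powers and $\chi(\varpi)$-powers then produces exactly the asserted prefactor $q^{j-i}\bigl(\chi(\varpi)q^{-s}\bigr)^{-i-j}$, and the cutoff $x\in\Pf^{-r}$ passes to $z\in\Pf^{-r}$ under the substitution. The main obstacle is the bilinear-algebra verification that the cumulative cocycle built in the previous two steps---from the Kubota composition, the factorisation $h(-\varpi^i y^{-1})=h(u')h(\varpi^j)$, and the move of $s(h(\varpi^j))$ across the trivial section on $N$---collapses, after invoking $(x,-x)=1$, $(x,y)(y,x)=1$, $(x,x)=(-1,x)$ and bilinearity and after the substitution $z=-\varpi^{2i}y^{-1}$, to precisely $\eta_\varpi(z)^{i-j}=(\varpi,z)^{i-j}$, with any residual symbols having both arguments in $F_d^*$ absorbed into $\xi_{\varpi,\psi}(\varpi^{-i-j}z)$ via Lemma \ref{split hilbert}.
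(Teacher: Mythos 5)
Your algebraic bookkeeping (the Bruhat identity, the substitution $z=-\varpi^{2i}y^{-1}$, the Jacobian, and the resulting prefactor $q^{j-i}(\chi(\varpi)q^{-s})^{-i-j}$) is consistent with the statement, but the analytic core of your argument rests on a claim that is false: there is no half-plane in which both integrals converge absolutely. The outer Whittaker functional in $\lambda^w_{i}=\lambda_{i,\psi',\chi^{-1}_{\varpi,\psi},-s}\circ A_w(\chi_{\varpi,\psi},s)$ is applied to $A_w f\in I(\chi^{-1}_{\varpi,\psi},-s)$, and its integral converges absolutely only when $\ab\chi^{-1}(\varpi^d)\ab<q^{-\operatorname{Re}(s)d}$, i.e.\ in the half-plane opposite to the one where the intertwining integral converges; in the region $\ab\chi(\varpi^d)\ab<q^{\operatorname{Re}(s)d}$ it is only the stabilized limit $\lim_{r\to\infty}\int_{\Pf^{-r}}$. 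Consequently the double $(x,y)$-integral is not absolutely convergent (the trouble is the end $y\to 0$, i.e.\ $z\to\infty$, where your extracted scalar behaves like $\ab y\ab^{-\operatorname{Re}(s)-1}$ against $dy$), so Fubini, the translation $x\mapsto x+\varpi^{2i}y^{-1}$ inside the truncated domain $\Pf^{-r}$, and the interchange of $\lim_{r\to\infty}$ with the unbounded $y$-integration all need a genuine justification that your sketch does not supply. This is exactly the point the paper's proof is built to handle: it does not work with a general $f$, but evaluates $\lambda^w_i$ on explicit test functions $f_{m,j,\chi_{\varpi,\psi},s}$ supported on a single cell $\widetilde{B_d(F)}s(h(\varpi^j))s(w)K^+_m$ and dual to the basis, $\lambda_i(f_{m,j,\chi_{\varpi,\psi},s})=\delta_{ij}$, so that $\tau(i,j,\chi_{\varpi,\psi},s,\psi')=\lambda^w_i(f_{m,j,\chi_{\varpi,\psi},s})$; the support condition forces $x+z\in\Pf^m$, hence $\psi'^{-1}(x)=\psi'(z)$, makes every interchange legitimate, and the truncation is tracked by the explicit volume factor $\operatorname{Vol}_{\psi'}(\Pf^{-r}\cap(z+\Pf^m))$ before letting $r\to\infty$. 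Either adopt that device (or some equivalent regularization) or supply a uniformity argument for your interchange; as written, the step fails.

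A secondary, smaller issue: you flag the cocycle computation as ``the main obstacle'' but never carry it out. The paper does it concretely: the Kubota cocycle for $s(w)s(n(y))s(h(\varpi^i))s(w)s(n(x))$ produces $(\varpi,-1)^i(-\varpi^i,y)$, which after $z=-\varpi^{2i}y^{-1}$ becomes $\eta^i_\varpi(z)(-1,z)$, and the final factorization through $s(h(\varpi^j))s(w)$ leaves exactly $\eta_\varpi(z)^{i-j}$ together with the $\xi_{\varpi,\psi}(\varpi^{-i-j}z)$ coming from the genuine character; this verification is part of the content of the lemma and should not be left as an assertion.
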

\begin{proof} Define
 $$K_m=\{g\in \slt \mid g=I_{2n} \, ( \operatorname{mod} \, \Pf^m) \}.$$
$\widetilde{K_m}$ is an open subgroup of $\mslt$. For $m$ sufficiently large, $\mslt$ splits over $K_m$ via the trivial section.  Denote
$K_m^+=\widetilde{K_m}\cap s\bigl(N(F)\bigr)$. Suppose that $m>\max\{e(\chi), e(\psi')\}$. Let  $f_{m,j,\chi_{_{\varpi,\psi}},s} \in I(\chi_{\varpi,\psi},s\bigr)$ be the function supported in $$\widetilde{B_d(F)}s\bigl( h(\varpi^j) K_m w\bigr)=\widetilde{B_d(F)} s\bigl(h(\varpi^j)\bigr)s(w) K^+_m $$ normalized such that
$$f_{m,j,\chi_{_{\varpi,\psi}},s}(bs\bigl(h(\varpi^j)\bigr)s(w)k)=\operatorname{Vol}^{-1}_{\psi'}(\Pf^m)(\chi_{\varpi,\psi},s\bigr) \cdot \delta^\half(b)$$ for all
$b \in \pbm, \, k \in K_m^+$.
Arguing as in Lemma 1.31 of \cite{KP} we have
$$\lambda_{i,\psi',\chi_{\varpi,\psi},s}\bigl( f_{m,j,\chi_{_{\varpi,\psi}},s} \bigr)=\begin{cases} 1 & i=j ;\\ 0 & i \neq j . \end{cases}$$
Thus, it is sufficient to show that if $\ab \chi\bigl(\varpi^d) \ab <q^{Re(s)d}$ then
\begin{equation} \label{this is what we need} \lambda^w_{i,\psi',\chi_{\varpi,\psi},s}\bigl( f_{m,j,\chi_{_{\varpi,\psi}},s} \bigr)=q^{j-i} (\chi(\varpi)q^{-s})^{-i-j} \times$$ $$ \lim_{r \rightarrow \infty} \int_{F^*_{d,i+j}\cap \Pf^{-r}}  \ab z \ab^s \chi(z)\eta_\varpi(z)^{i-j}\xi_{\varpi,\psi}\bigl(\varpi^{-i-j}z\bigr)\psi'(z)  \, d_{\psi'}^*z .\end{equation}
We have
$$\lambda^w_{i,\psi',\chi_{\varpi,\psi},s}\bigl( f_{m,j,\chi_{_{\varpi,\psi}},s} \bigr)=$$ $$\lim_{r \rightarrow \infty} \int_{\Pf^{-r}}\bigl((A_{w}\bigl(\chi_{\varpi,\psi},s\bigr)\bigl( f_{m,j,\chi_{_{\varpi,\psi}},s} \bigr)\bigl(s\bigl(h(\varpi^i)\bigr)s(w)^{-1}s(n(x))\bigr) \psi'^{-1}(x) \, d_{\psi'}x=$$
\begin{equation} \label{the double integral} \lim_{r \rightarrow \infty} \int_{\Pf^{-r}} \Bigl(\int_{F^*} f_{m,j,\chi_{_{\varpi,\psi}},s}\bigl(s(w)s(n(y))s\bigl(h(\varpi^i)\bigr)s(w)s(n(x))\bigr)  \, d_{\psi'}y \Bigr) \psi'^{-1}(x) \, d_{\psi'}x.  \end{equation}
By a matrix multiplication and by using the cocycle formula \eqref{rao} we have
$$s(w)s \bigl(n(y)\bigr)s\bigl(h(\varpi^i)\bigr)s(w)s\bigl(n(x)\bigr)=\left(\begin{pmatrix} _{-\varpi^i y^{-1}} & _{\varpi^{-i}}\\_{0} &
_{-\varpi^{-i}y}\end{pmatrix}wn(x-\varpi^{2i}y^{-1}),(\varpi,-1)^i(-\varpi^i,y) \right).$$
Hence, we may write the inner integral in \eqref{the double integral} as
$$\int_{F^*} \ab y \ab f_{m,j,\chi_{_{\varpi,\psi}},s}\left(\begin{pmatrix} _{-\varpi^i y^{-1}} & _{\varpi^{-i}}\\_{0} &
_{-\varpi^{-i}y}\end{pmatrix}wn(x-\varpi^{2i}y^{-1}), (\varpi,-1)^i(-\varpi^i,y) \right)  \, d_{\psi'}^*y.$$
Making the change of variables $z=-\varpi^{2i}y^{-1}$ we obtain
$$ q^{-2i} \int_{F^*} \ab z \ab^{-1} f_{m,j,\chi_{_{\varpi,\psi}},s}\left(\begin{pmatrix} _{\varpi^{-i}z} & _{-\varpi^{-i}}\\_{0} &
_{z^{-1}\varpi^{i}}\end{pmatrix}wn(x+z),\eta^i_\varpi(z)(-1,z) \right)  \, d_{\psi'}^*z. $$
Observe now that unless $x+z \in \Pf^m$ the last integrand vanishes. On the other hand ,if $x+z \in \Pf^m$ then $\psi'^{-1}(x)=\psi'(z)$. By the right invariance property of  $f_{m,j,\chi_{_{\varpi,\psi}},s}$ we have

$$\lambda^w_{i,\psi',\chi_{\varpi,\psi},s}\bigl( f_{m,j,\chi_{_{\varpi,\psi}},s} \bigr)=$$ $$ q^{-2i} \, \lim_{r \rightarrow \infty} \int_{\Pf^{-r}} \left( \int_{z \in F^*, \, x-\varpi^i z \in \Pf^m}  \ab z \ab^{-1} f_{m,j,\chi_{_{\varpi,\psi}},s}\left(\begin{pmatrix} _{\varpi^{-i}z} & _{-\varpi^{-i}}\\_{0} &
_{z^{-1}\varpi^i}\end{pmatrix}w,\eta^i_\varpi(z) (-1,z) \right)   \psi'(z) \, d_{\psi'}^*z \right) \, d_{\psi'}x.$$
Changing the order of integration gives
$$q^{-2i}  \, \lim_{r \rightarrow \infty}  \int_{z \in F^*}  \ab z \ab^{-1} f_{m,j,\chi_{_{\varpi,\psi}},s} \left(\begin{pmatrix} _{\varpi^{-i}z} & _{-a^{-i}}\\_{0} &_{z^{-1}\varpi^i}\end{pmatrix}w,\eta^i_\varpi(z) (-1,z)\right)   \psi'(z)  \phi(z,r,m) \, d_{\psi'}^*z$$
where
$$\phi(z,r,m)=\operatorname{Vol}_{\psi'} \bigl(\Pf^{-r} \cap z+\Pf^m \bigr).$$
Thus,
$$\lambda^w_{i,\psi',\chi_{\varpi,\psi},s}\bigl( f_{m,j,\chi_{_{\varpi,\psi}},s} \bigr)=$$ $$q^{-2i}\operatorname{Vol}_{\psi'} \bigl(\Pf^m \bigr) \lim_{r \rightarrow \infty} \int_{F^* \cap \Pf^{-r}} \ab z \ab^{-1} f_{m,j,\chi_{_{\varpi,\psi}},s}\left(\begin{pmatrix} _{\varpi^{-i}z} & _{-\varpi^{-i}}\\_{0} &
_{z^{-1}\varpi^i}\end{pmatrix}w,\eta^i_\varpi(z) (-1,z) \right)   \psi'(z)  \, d_{\psi'}^*z.$$
We now write
$$\left(\begin{pmatrix} _{\varpi^{-i}z} & _{-\varpi^{-i}}\\_{0} &
_{z^{-1}\varpi^i}\end{pmatrix}w,\eta^i_\varpi(z) (-1,z)\right)=\left(\begin{pmatrix} _{(\varpi)^{-i-j}z} & _{-\varpi^{j-i}}\\_{0} &
_{z^{-1}\varpi^{i+j}}\end{pmatrix},\eta^{i-j}(z)\right) s\left(h(\varpi^j)\right) s(w).$$
Recalling the definition of $f_{m,j,\chi_{_{\varpi,\psi}},s}$, \eqref{this is what we need} now follows.
\end{proof}
Note that the factor $q^{j-i}$ in \eqref{local as int again } is eliminated  if we redefine $\lambda_{i,\psi',\cdot,\cdot}$ by multiplying it
by $\delta^{\half}h(\varpi^i)=q^{-i}$. We shall not do so since we want our coefficients to be compatible with those defined in \cite{KP}.

\subsection{Explicit Formulas} \label{Explicit Formulas}
We shall now assume, without loss of generality, that $\psi$ is spherical. Following remark \ref{remark on split} we shall also assume and that if $n=0 \, (\operatorname{mod }4)$ then $\gamma_\psi(\varpi)=1$. We shall now give formulas for $D(\chi_{_{\varpi,\psi}},s,\psi')$ under the assumption that $\psi'=\psi$. This last assumption does affect the analytic behaviour of the local coefficients. However, the computation below is sufficient for our purposes, i.e., the computation of the Plancherel measure in Theorem \ref{main formula} below. Moreover, our computation can easily be modified to include all Whittaker characters. Recall that  once $\psi$ and $\varpi$ have been fixed, the actual inducing data for $I\bigl(\chi_{\varpi,\psi},s\bigr)$ is $s \, \bigl(\operatorname{mod} \frac{2\varpi i}{d \ln(q)} \Z \bigr)$ and the restriction of $\chi$ to ${F^*}^d$. Using Lemma \ref{cartan rep} we may assume that if $n$ is odd then $\chi$ is either trivial or that $\chi^{n}$ is ramified. For even $n$ we have to take another case into consideration, i.e., $\chi=\eta_\varpi$. Last, we formulate our results using $\epsilon$ factors and $L-$functions. Thus, it is convenient to think of $\chi$ as a character of $F^*$ rather than a character of $F^*_d$. Of course, our formulas for $\tau(i,j,\chi_{_{\varpi,\psi}},s,\psi)$ depends only on the restriction of $\chi$ to $F^*_d$.

For an integer $k$ define $k'$ to be the unique number such that $$k'=k \, ( \operatorname{mod} \, n), \, 0 \leq k \leq n-1.$$

\begin{lem} \label{explicit odd} Suppose that $n$ is odd. We can omit the subscript ${\varpi,\psi}$ from $\chi_{_{\varpi,\psi}}$\\
1. We have
$$\tau(j,j,1,s,\psi)= L(ns,1) \times \begin{cases} (1-q^{-1})  & 2j<n-1;   \\ \\ L^{-1}(1-ns,1) & 2j=n-1;  \\ \\ q^{ns}(1-q^{-1})  & 2j>n-1.  \end{cases}$$

and for $i \neq j$ we have
$$\tau(i,j,1,s,\psi)= \begin{cases} q^{j-i+s(n-1)}\epsilon^{-1} \bigl(s,\eta_\varpi^{i-j},\psi \bigr)  & j+i=n-1 ; \\ \\ 0  & otherwise . \end{cases}$$

2. If $\chi^n$ is ramified then
$$\tau(i,j,\chi,s,\psi)= \begin{cases} \chi(-1)q^{j-i} \bigl(\chi(\varpi)q^{-s}\bigr)^{-i-j}\epsilon^{-1} \bigl(s,\chi \eta_\varpi^{i-j},\psi \bigr)  & j+i+e(\chi)=0 \,  \operatorname{mod}(n) ; \\ \\ 0  & otherwise . \end{cases}$$
\end{lem}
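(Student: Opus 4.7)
The plan is to reduce the integral in \eqref{local as int again } to a single instance of the functional equation Theorem \ref{few functional equations} applied to the twisted character $\chi' = \chi\eta_\varpi^{i-j}$. Since $n$ is odd we have $d=n$ and $\xi_{\varpi,\psi}\equiv 1$, so the integrand in \eqref{local as int again } collapses to $|z|^s\chi'(z)\psi(z)\beta_{n,i+j}(z)$, with the outside prefactor $q^{j-i}(\chi(\varpi)q^{-s})^{-i-j}$.

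To realize the regularized integral as a Mellin transform of a Fourier transform I would introduce the test function $\phi_m = \mathbf{1}_{1+\Pf^m}$ with $m$ large. Because $\psi$ is unramified, a direct computation yields $\widehat{\phi_m}(x) = q^{-m}\psi(x)\mathbf{1}_{\Pf^{-m}}(x)$, so
\[
q^m\,\zeta_{n,i+j}(s,\chi',\widehat{\phi_m}) = \int_{F^*_{n,i+j}\cap \Pf^{-m}} |z|^s\chi'(z)\psi(z)\,d_\psi^*z,
\]
which stabilizes for $m$ large to the regularized integral in \eqref{local as int again }, at least in the strip of absolute convergence; meromorphic continuation then extends the identity to all $s$.

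Applying Theorem \ref{few functional equations} expresses $\zeta_{n,i+j}(s,\chi',\widehat{\phi_m})$ as $\sum_{k=0}^{n-1}\theta_k(s,\chi',\psi)\,\zeta_{n,k+e(\psi,\chi')-i-j}(1-s,\chi'^{-1},\phi_m)$. Since $\phi_m$ is supported on $1+\Pf^m \subset \Of^*$, the truncation $\beta_{n,\ell}\phi_m$ is nonzero only when $\ell \equiv 0\pmod n$, and if moreover $m \ge e(\chi')$ the surviving Mellin integral equals $\operatorname{Vol}_\psi(\Pf^m) = q^{-m}$. Hence exactly one index contributes, namely the unique $k_0$ with $0\le k_0\le n-1$ and $k_0 \equiv i+j+e(\chi') \pmod n$ (using $e(\psi)=0$), giving
\[
\tau(i,j,\chi,s,\psi) = q^{j-i}\bigl(\chi(\varpi)q^{-s}\bigr)^{-i-j}\,\theta_{k_0}(s,\chi',\psi).
\]

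The remainder is a short case analysis using the explicit formulas for $\theta_{k_0}$. For part~1 with $i=j$, $\chi'=1$ is unramified and $k_0 = 2j\bmod n$; reading off the unramified $\theta$ in the ranges $2j<n-1$, $2j=n-1$, $2j>n-1$ gives the three displayed values, the extra $q^{ns}$ in the last range being the interplay of the shift $k_0 = 2j-n$ with the factor $q^{-sk_0}$ inside $\theta$. For $i\ne j$ with $\chi=1$, $\chi'=\eta_\varpi^{i-j}$ is ramified with $e(\chi')=1$ (nontrivial on $\Of^*$, trivial on $1+\Pf$), and the ramified $\theta$ forces $k_0=0$, equivalent to $i+j=n-1$. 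For part~2, $\chi'^n = \chi^n$ (since $\eta_{\varpi^n}=1$) is ramified, hence $\chi'$ is ramified; using $1+\Pf \subset {F^*}^n$ a short argument shows $e(\chi')=e(\chi)$, so $k_0=0$ becomes $i+j+e(\chi)\equiv 0\pmod n$. Finally, $\chi'(-1) = \chi(-1)\,\eta_\varpi(-1)^{i-j}=\chi(-1)$, since for $n$ odd $(\varpi,-1)\in \mu_n\cap\mu_2=\{1\}$. The only real effort, beyond invoking Theorem \ref{few functional equations}, is the bookkeeping of the prefactors $q^{j-i}$, $(\chi(\varpi)q^{-s})^{-i-j}$ and the internal power $q^{-sk_0}$ sitting inside $\theta_{k_0}$, and the verification in each case that $\chi'(-1)=\chi(-1)$ and $e(\chi')=e(\chi)$.
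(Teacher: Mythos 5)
Your proposal is correct and follows essentially the same route as the paper: the paper also takes the test function $\1_{1+\Pf^r}$ (suitably normalized) with unramified $\psi$, identifies the regularized integral in the reduction lemma with $\zeta_{n,i+j}(s,\chi\eta_\varpi^{i-j},\widehat{\phi})$, notes that only the index $\equiv 0 \pmod n$ survives on the right of Theorem \ref{few functional equations} with value $1$, and concludes $\tau(i,j,\chi,s,\psi)=q^{j-i}(\chi(\varpi)q^{-s})^{-i-j}\theta_{(i+j+e(\chi\eta_\varpi^{i-j}))'}(s,\chi\eta_\varpi^{i-j},\psi)$ before the same case-by-case bookkeeping. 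Your auxiliary verifications ($e(\chi\eta_\varpi^{i-j})=e(\chi)$, $\eta_\varpi(-1)=1$ for odd $n$, and the exponent bookkeeping) are exactly what the paper leaves to the "straightforward" computation.
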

\begin{proof} Define $\phi=q^{-r}\1_{1+\Pf^r}$. A standard computation shows that $\widehat{\phi}(x)=\psi(x)1_{\Pf^{-r}}(x)$. Thus,
$$ \int_{F^*_{d,i+j} \cap \Pf^{-r}}  \ab z \ab^s \chi(z)\eta_\varpi^{i-j}(z)\psi'(z)  \, d^*z=\zeta_{n,k}(s,\chi\eta_\varpi^{i-j},\widehat{\phi}).$$
It is easy to see that if $r>\max \{1,e(\chi)\}$ then
$$\zeta_{n,k}(1-s,\chi^{-1}\eta_\varpi^{j-i},\phi)=\begin{cases} 1 & k=0 \, ( \operatorname{mod} \, n) ;\\ \\ 0  & otherwise.  \end{cases}$$
Therefore, by Theorem \ref{aritur} we have
$$\tau(i,j,\chi,s,\psi)=q^{j-i} (\chi(\varpi)q^{-s})^{-i-j}\theta_{\bigl(i+j+e(\eta_\varpi^{i-j}\chi)\bigr)'}(s,\chi\eta_\varpi^{i-j},\psi).$$

The proof is now completed by a straight forward case by case computation.
\end{proof}
For $n \in 2\Z$  define
$$\alpha(n)=\begin{cases} 1 & n=0 \, ( \operatorname{mod} \, 4) ;\\ \\ 0  & n=2 \, ( \operatorname{mod} \, 4).  \end{cases}$$
\begin{lem} \label{explicit even} Suppose that $n$ is even\\
1. We have
$$\tau(j,j,1_{\varpi,\psi},s,\psi)=\begin{cases} \frac {L(ns,1) L(\frac {1-ns}{2},1)} {L(1-ns,1)L(\frac{1+ns}{2},1)}&  \, j=\frac{d-1}{2} ;\\ \\  (1-q^{-1}) L(ns,1)  & otherwise  \end{cases},$$
and for $i \neq j$ we have
$$\tau(i,j,1_{\varpi,\psi},s,\psi)= \begin{cases} q^{j-i+s(d-1)}\epsilon^{-1}(2s,\eta_\varpi^{2(i-j)},{\psi_{_2}}) \epsilon(s+\half,\eta_\varpi^{i-j},\psi)
  & j+i=d-1 ; \\ \\ 0  & otherwise. \end{cases}$$
2. If $j-i=1 \,  \operatorname{mod}(d),$  then

$$\tau(i,j,({\eta_\varpi})_{\varpi,\psi},s,\psi)=q^{(s+1)(j-i)}\epsilon(s+\half,\eta^d_\varpi,\psi)L(1,ns)  \begin{cases} L^{-1}(1-ns,1) & (i,j)=(d-1,0)   ; \\ \\
(1-q^{-1})  & otherwise   \end{cases},$$
and if $j-i \neq1 ,\,  \operatorname{mod}(d)$ then
$$\tau(i,j,({\eta_\varpi})_{\varpi,\psi},s,\psi)= \begin{cases} q^{(j-i)+s(d-1)} \epsilon(s+\half, \eta^{i-j+1}_\varpi,\psi)\epsilon^{-1}(2s,\eta^{2(i-j+1)}_\varpi,\psi_2)& i+j=d-1  ;  \\ \\
0  & otherwise  . \end{cases}$$
3. Suppose that $\chi^n$ is ramified. If $j+i+e(\chi)=0 \, ( \operatorname{mod} \, d)$,  then

$$\tau(i,j,\chi_{\varpi,\psi},s,\psi)=$$ $$\ q^{j-i} (\chi(\varpi)q^{-s})^{-i-j}   \chi(-1)\epsilon^{-1}(2s,\chi^2\eta_\varpi^{2(i-j)},{\psi_{_2}}) \epsilon(s+\half,\chi\eta_\varpi^{(d+1)(i-j)+\alpha(n)(e(\chi)+i+j)},\psi), $$
and otherwise $\tau(i,j,\chi_{\varpi,\psi},s,\psi)=0$.

\end{lem}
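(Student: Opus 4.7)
The argument parallels the proof of Lemma \ref{explicit odd}: I start from the integral representation \eqref{local as int again } with $\psi'=\psi$, recognize it as a metaplectic Mellin transform, apply Theorem \ref{meta few functional equations} (the metaplectic analog used in place of the ordinary Tate functional equation), and evaluate the resulting dual Mellin transform against the explicit test function $\phi_r=q^{-r}\mathbf{1}_{1+\Pf^r}$ already exploited in the odd case. The essential new input is that for even $n$ the splitting $\xi_{\varpi,\psi}$ of the Hilbert symbol on $F^*_d$ is built out of the normalized Weil index $\gamma_\psi^{-1}$ by Lemmas \ref{split hilbert} and \ref{split porp}, which is exactly the ingredient needed to bring the integral into the framework of the metaplectic-Tate factor $\widetilde{\gamma}$.

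The first step is to decompose the integration domain $F^*_{d,i+j}$ as the disjoint union $F^*_{n,(i+j)'}\sqcup F^*_{n,(i+j+d)'}$ and to substitute for $\xi_{\varpi,\psi}(\varpi^{-i-j}z)$ using Lemma \ref{split porp}. When $d$ is odd this is uniform and produces a single scalar multiple of $\gamma_\psi^{-1}(z)$; when $d$ is even, the two pieces differ by a scalar of the form $\gamma_\psi^{-1}(\varpi)$ together with a factor $(\varpi,\cdot)^d$, as recorded in Lemma \ref{split porp}.2. Next, using Hensel's lemma (valid since our standing hypothesis forces $p$ odd) one has $1+\Pf^r\subset F^{*2}$ for $r$ large, so that $\gamma_\psi^{-1}(zy)=\gamma_\psi^{-1}(z)$ for $y\in 1+\Pf^r$. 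It follows that $\widetilde{\phi_r}(z)=\gamma_\psi^{-1}(z)\psi(z)\mathbf{1}_{\Pf^{-r}}(z)$ in the relevant range, and hence each piece of the integral from \eqref{local as int again } is recognized as a Mellin transform $\zeta_{n,k}(s,\chi\eta_\varpi^{i-j},\widetilde{\phi_r})$ times an explicit scalar.

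Applying Theorem \ref{meta few functional equations} now expands each $\zeta_{n,k}(s,\chi\eta_\varpi^{i-j},\widetilde{\phi_r})$ as a linear combination of the dual Mellin transforms $\zeta_{n,m+e(\psi,\chi^2\eta_\varpi^{2(i-j)})-k}(1-s,\chi^{-1}\eta_\varpi^{j-i},\phi_r)$. Exactly as in Lemma \ref{explicit odd}, the latter equals $1$ when its shift is $\equiv 0\pmod{n}$ and vanishes otherwise (once $r$ is large enough), so at most one $m$ survives per piece and $\tau(i,j,\chi_{\varpi,\psi},s,\psi)$ is reduced to an explicit sum of one or two $\widetilde{\theta}_m$-values multiplied by the prefactor $q^{j-i}(\chi(\varpi)q^{-s})^{-i-j}$ together with the scalars produced by $\xi_{\varpi,\psi}$.

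The three parts of the lemma then correspond to matching the three branches of Theorem \ref{meta few functional equations} against the effective character $\chi\eta_\varpi^{i-j}$. In part 1, with $\chi=1$, the effective character $\eta_\varpi^{i-j}$ is unramified precisely when $i=j$, which delivers the diagonal formula via the unramified branch; for $i\neq j$ with $|i-j|<d$, even the square $\eta_\varpi^{2(i-j)}$ is ramified, so only the $m=0$ contribution survives and the condition $i+j=d-1$ emerges from the congruence $m+e(\psi,\chi^2\eta_\varpi^{2(i-j)})-k\equiv 0\pmod{n}$ applied to the two possible values of $k$. Part 2 is analogous after the substitution $i-j\leadsto i-j+1$, with the intermediate ``$\chi$ ramified, $\chi^2$ unramified'' branch now becoming active precisely when $i-j+1\equiv 0\pmod{d}$. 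Part 3 falls entirely in the ``$\chi^2$ ramified'' branch (since $p$ odd implies $e(\chi^2)=e(\chi)$), and the congruence $i+j+e(\chi)\equiv 0\pmod{d}$ arises from the shift $e(\psi,\chi^2\eta_\varpi^{2(i-j)})=-e(\chi)$. The main obstacle is the bookkeeping: combining the $\gamma_\psi^{-1}(\varpi^{\bullet})$ constants extracted from $\xi_{\varpi,\psi}$ with the $\gamma_F^{-1}(\psi_{-1})$, $\epsilon^{-1}(2s,\cdot,\psi_2)$ and $\epsilon(s+\tfrac{1}{2},\cdot,\psi)$ factors coming out of $\widetilde{\theta}_m$, and distinguishing the $d$-odd from the $d$-even regimes with enough care that the extra twist $\eta_\varpi^{\alpha(n)(e(\chi)+i+j)}$ in part 3 emerges correctly.
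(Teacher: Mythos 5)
Your proposal follows essentially the same route as the paper: decompose $F^*_{d,i+j}$ into the two $F^*_{n,\bullet}$ pieces via Lemma \ref{split porp}, use the test function $\phi=q^{-r}\1_{1+\Pf^r}$ with $\widetilde{\phi}(x)=\gamma_\psi^{-1}(x)\psi(x)\1_{\Pf^{-r}}(x)$ (the paper cites \cite{Sz 3} for this rather than rederiving it), apply Theorem \ref{meta few functional equations} so that the dual Mellin transform against $\phi$ isolates at most one $\widetilde{\theta}_m$ per piece, and finish by the same case-by-case matching of branches that the paper also leaves as a straightforward computation. This is correct and matches the paper's proof.
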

\begin{proof} By Lemma \ref{split porp} we have

$$\tau(i,j,\chi_{\varpi,\psi},s,\psi)=q^{j-i} (\chi(\varpi)q^{-s})^{-i-j} \times $$
$$\Bigl( \int_{F^*_{n,i+j}}  \ab z \ab^s \chi(z)\eta_\varpi(z)^{(d+1)(i-j)}\gamma_\psi\bigl(z\bigr)\psi'(z)  \, d^*z+\int_{F^*_{n,d+i+j}}  \ab z \ab^s \chi(z)\eta_\varpi(z)^{(d+1)(i-j)+d\alpha(n)}\gamma_\psi\bigl(z\bigr)\psi'(z)  \, d^*z \Bigr).$$
Define again $\phi=q^{-r}\1_{1+\Pf^r}$. It was shown in Section 2 of \cite{Sz 3} that  $$\widetilde{\phi}(x)=\gamma_\psi^{-1}(x)\psi(x)\1_{{\Pf^{-r}}}(x).$$
Utilizing Theorem  \ref{meta ari} and arguing as in the proof of Theorem \ref{explicit odd} we obtain
$$\tau(i,j,\chi_{_{\psi}},s,\psi)=q^{j-i} (\chi(\varpi)q^{-s})^{-i-j} \times$$  \begin{equation} \label{to be reminded soon} \Bigl(\widetilde{\theta}_{\bigl(e(\chi^2 \eta_\varpi^{2(i-j)})+i+j\bigr)'}(s,\chi\eta_\varpi^{(d+1)(i-j)},\psi)+\widetilde{\theta}_{\bigl(e(\chi^2 \eta_\varpi^{2(i-j)})+d+i+j\bigr)'}(s,\chi\eta_\varpi^{(d+1)(i-j)+d\alpha(n)},\psi) \Bigr). \end{equation}
The proof is now reduced to a straight forward case by case computation.

\end{proof}
\section {An Irreducibility result.} \label{irr res}
Fix $\tau$, a genuine smooth admissible irreducible representation of $\widetilde{H(F)}$. By Lemma \ref{cartan rep} $\tau \simeq i\bigl(\chi_{\varpi,\psi} \bigr)$ for some character $\chi$ of $F^*_d$.
Extend $\chi$ to a character of $F^*$. Using Lemma \ref{cartan rep} again we observe that $\chi^n$ is independent of the particular chosen extension and of the splitting $\xi_{\varpi,\psi}$. Thus, it is an invariant of $\tau$. Define now $$I(\tau,s)=\operatorname{Ind}_{\widetilde{B(F)}}^{\mslt} \tau_s.$$
Let $$A_{w}(\tau,s ):I(\tau,s ) \rightarrow I(\tau^w,-s)$$ be the meromorphic continuation of the integral
\begin{equation} \label{noreal} \int_F f\bigl(s(w)s(n(x))g)\, d_\psi x. \end{equation}
Here $\tau^w$ is the representation $h \mapsto \tau\bigl(s(w)h s(w)^{-1} \bigr)$. For all but finitely many values of $q^s$, $A_{w}(\tau,s )$ is analytic and
$$\operatorname{Hom}_{\mslt} \Bigl(I(\tau,s ),I(\tau,s) \Bigr)$$
is one dimensional. Thus, there exists a rational function in $q^{-s}$, $\mu\bigl(\tau,s\bigr)$, such that
$$A_{w^{-1}}\bigl(I(\tau^w,-s)\bigr) \circ A_{w}\bigl(I(\tau,s )\bigr)=\mu^{-1}(\tau,s) \operatorname{Id}.$$
\begin{thm} \label{main formula}
$$\mu^{-1}(\tau,s\bigr)=q^{e(\chi^n,\psi)}\frac{L \bigl(ns,\chi^n \bigr)L \bigl(-ns,\chi^{-n}\bigr)}{L \bigl(1-ns,\chi^{-n} \bigr)L \bigl(1+ns,\chi^{n}\bigr)}.$$
\end{thm}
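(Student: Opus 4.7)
The plan is to relate $\mu^{-1}(\tau,s)$ to the matrix coefficients $\tau(i,j,\chi_{\varpi,\psi},s,\psi)$ computed in Section \ref{Explicit Formulas}, and then to evaluate a single, cleverly chosen entry of the resulting matrix product.

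First, the identity $A_{w^{-1}}(\tau^w,-s)\circ A_w(\tau,s)=\mu^{-1}(\tau,s)\operatorname{Id}$ dualizes to $\widehat{A}_w(\tau,s)\circ \widehat{A}_{w^{-1}}(\tau^w,-s)=\mu^{-1}(\tau,s)\operatorname{Id}$ on $Wh_\psi(\tau,s)$. A direct Kubota-cocycle computation shows that $s(w)(h(a),\epsilon)s(w)^{-1}=(h(a^{-1}),\epsilon)$, so $\tau^w\simeq i(\chi^{-1}_{\varpi,\psi})$ (with the analogous statement for even $n$ after accounting for $\xi_{\varpi,\psi}$). Writing out the composition in the bases $\{\lambda_{i,\psi,\cdot,s}\}$ produces the scalar identity
\[ \sum_{j=0}^{d-1}\tau(i,j,\chi^{-1}_{\varpi,\psi},-s,\psi)\,\tau(j,k,\chi_{\varpi,\psi},s,\psi)=\mu^{-1}(\tau,s)\,\delta_{i,k}. \]

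Next, the sparse structure of $D$ established in Lemmas \ref{explicit odd} and \ref{explicit even} --- $\tau(i,j,\cdot)$ vanishes unless $i=j$ or $i+j\equiv -e(\chi)\pmod{d}$ --- reduces each diagonal $(i,i)$-entry of the product to at most two terms. One picks the index making the computation simplest: for $\chi$ unramified with $d$ odd (i.e.\ $n$ odd, or $n\equiv 2\pmod 4$), the choice $i=(d-1)/2$ sits simultaneously on the main and anti-diagonals, collapsing the sum to a single product of the ``exceptional'' diagonal values of Lemmas \ref{explicit odd}/\ref{explicit even} at $\pm s$. Their $L$-factor pieces telescope to the claimed ratio $L(ns,\chi^n)L(-ns,\chi^{-n})/(L(1-ns,\chi^{-n})L(1+ns,\chi^n))$ --- in particular the spurious $L\bigl(\tfrac{1\mp ns}{2},1\bigr)$ factors in the even case cancel in pairs --- while the surviving $\epsilon$-factors $\epsilon(s+\tfrac12,\chi,\psi)$ and $\epsilon(2s,\chi^2,\psi_2)$ from the even case pair via \eqref{epsilon product} into the single prefactor $q^{e(\chi^n,\psi)}$.

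For the remaining sub-cases ($\chi=\eta_\varpi$, $\chi^n$ ramified, or $d$ even with $\chi$ unramified), the same mechanism works but with two anti-diagonal terms rather than one: each pairing of $\epsilon(s,\chi',\psi)$ with $\epsilon(-s,{\chi'}^{-1},\psi)$ collapses via \eqref{epsilon product} to $\chi'(-1)q^{-e(\psi,\chi')}$, and the exceptional $L$-factors from Lemma \ref{explicit even} (or their triviality when $\chi^n$ is ramified) conspire to yield the same formula. All $\eta_\varpi^k$-twisted pieces depend only on the chosen extension of $\chi$ from $F_d^*$ to $F^*$ and must cancel, since the answer is an invariant of $\tau$ by Lemma \ref{cartan rep}.

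The main obstacle is the bookkeeping across the five sub-cases, each requiring verification that the $\gamma_F^{-1}(\psi_{-1})$, $\epsilon^{-1}(2s,\chi^2,\psi_2)$, and $\epsilon(s+\tfrac12,\chi,\psi)$ contributions from the metaplectic identity \eqref{meta gama formula} collapse cleanly upon pairing with their $(-s)$-counterparts. Conceptually, every case reduces to a short telescoping using the single identity \eqref{epsilon product}; the independence of the answer from the index $i$ (forced by the first step) serves as a built-in consistency check at every stage.
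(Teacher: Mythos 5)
Your overall strategy is the paper's own: identify $\mu^{-1}(\tau,s)$ with the (scalar) product of the two explicit local coefficient matrices at $(\chi_{\varpi,\psi},s)$ and $(\chi^{-1}_{\varpi,\psi},-s)$, and evaluate it entrywise using Lemmas \ref{explicit odd} and \ref{explicit even} together with \eqref{Tate gamma}, \eqref{epsilon twist} and \eqref{epsilon product}; your refinement of computing only one well-chosen diagonal entry (legitimate, since one-dimensionality of the intertwining space already forces the composition to be scalar) is a genuine economy over the paper's full case-by-case matrix multiplication, but it is not a different method.

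There is, however, a concrete gap in your first step. The matrix of $\widehat{A}_{w^{-1}}(\tau^w,-s)$ with respect to the fixed bases is \emph{not} $D(\chi^{-1}_{\varpi,\psi},-s,\psi)$: the coefficients $\tau(i,j,\chi^{-1}_{\varpi,\psi},-s,\psi)$ are defined through $A_{w}(\chi^{-1}_{\varpi,\psi},-s)$, whereas the Plancherel composition uses $A_{w^{-1}}$, and since $w^{-1}=h(-1)w$ one has $A_{w^{-1}}(\chi^{-1}_{\varpi,\psi},-s)=\chi(-1)A_{w}(\chi^{-1}_{\varpi,\psi},-s)$. Hence your displayed identity should carry an extra factor $\chi(-1)$ on the left. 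This factor is not cosmetic: the matrix product itself produces a net $\chi(-1)$ (each anti-diagonal pairing via \eqref{epsilon product} contributes $(\chi\eta_\varpi^{k})(-1)$), which is exactly why the paper reduces the theorem to $D(\chi_{_{\varpi,\psi}},s,\psi)D(\chi^{-1}_{_{\varpi,\psi}},-s,\psi)=\chi(-1)q^{-e(\chi^n)}\frac{L(ns,\chi^n)L(-ns,\chi^{-n})}{L(1-ns,\chi^{-n})L(1+ns,\chi^{n})}\operatorname{Id}$. With your identity as written, the computation would output $\mu^{-1}(\tau,s)$ multiplied by a spurious $\chi(-1)$, a genuine sign error whenever $\chi(-1)=-1$ (e.g.\ ramified $\chi$ of even order); your appeal to ``invariance of the answer'' cannot repair this, since $\chi(-1)$ is unchanged under $\chi\mapsto\chi\eta_\varpi^{2m}$. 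Two smaller omissions: the formulas of Section \ref{Explicit Formulas} are proved only for spherical $\psi$ (with $\gamma_\psi(\varpi)=1$ when $4\mid n$), so one must first reduce to that case via $d_\psi x=q^{e(\psi)/2}dx$ — this is precisely where $q^{e(\chi^n,\psi)}$ rather than $q^{-e(\chi^n)}$ appears — and for $n$ even with $\chi=\eta_\varpi$ the entries of $D$ at $(\chi^{-1},-s)$ are not literally covered by Lemma \ref{explicit even} and must be extracted from \eqref{to be reminded soon}, as the paper notes.
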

\begin{proof} Fixing isomorphisms $I(\tau,s ) \simeq I\bigl(\chi_{\varpi,\psi},s  \bigr)$, $I(\tau^w,-s) \simeq I \bigl(\chi^{-1}_{\varpi,\psi},-s  \bigr)$ one immediately sees that
$$A_{w^{-1}}\bigl(\chi^{-1}_{\varpi,\psi},-s\bigr)\circ A_{w}\bigl(\chi_{\varpi,\psi},s\bigr)=\mu^{-1}\bigl(\tau,s\bigr)\operatorname{Id}$$
(we choose the Haar measure to be same one used in \eqref{noreal}). Since $d_\psi x=q^{ \frac {e(\psi)}{2}} \, dx$ it is sufficient to prove this theorem assuming that $\psi$ is spherical. We also assume, without loss of generality, that $\gamma_\psi(\varpi)=1$ and that $\chi$ is one of the characters that appears in Lemmas \ref{explicit odd} and \ref{explicit even}. Next we note that
since $$A_{w^{-1}}\bigl(\chi^{-1}_{\varpi,\psi},-s\bigr)=\chi(-1)A_{w}\bigl(\chi^{-1}_{\varpi,\psi},-s\bigr),$$
the proof of the theorem is done once we show
$$D(\chi_{_{\varpi,\psi}},s,\psi)D(\chi^{-1}_{_{\varpi,\psi}},-s,\psi)=\chi(-1)q^{-e(\chi^n)}\frac{L \bigl(ns,\chi^n \bigr)L \bigl(-ns,\chi^{-n}\bigr)}{L \bigl(1-ns,\chi^{-n} \bigr)L \bigl(1+ns,\chi^{n}\bigr)}\operatorname{Id}.$$
This identity is proven by a case by case matrix multiplication using the formulas in Lemmas  \ref{explicit odd} and \ref{explicit even}. More precisely, these two lemmas give the formula for $D(\chi^{-1}_{_{\varpi,\psi}},-s,\psi)$ for all $\chi$ in discussion except in the case where $n$ is even and $\chi=\eta_\varpi$. The formula in this case follows directly from \eqref{to be reminded soon} as well. During the course of the computation one should use \eqref{Tate gamma}, \eqref{epsilon twist} and \eqref{epsilon product} along with the fact that for all the cases, except for the case where $n$ is even and $\chi=\eta_\varpi$, we have $e(\chi^n)=e(\chi)$. This last assertion is proven by a similar argument to the one we used in the proof of Theorem \ref{meta ari} utilizing the fact that $1+\Pf \in {F^*}^n$.
\end{proof}
The Knapp-Stein Dimension Theorem for quasisplit p-adic groups, \cite{Sil}, gives a reducibility criteria for parabolic induction in terms of the Plancherel measure. The proof of the analogous result for metaplectic groups should follow using similar arguments to those that appear in  \cite{Sil}. Although at this point there is no written proof it is rather standard to assume this result for covering groups. See \cite{Bud} for an  example involving an $n$ fold cover of $GL_2(F)$. Moreover, in \cite{Li} Li assumed the theory of R-groups for $\mspn$, the metaplectic double cover of $\spn$ which is based on the Knapp-Stein Dimension Theorem. Assuming the Knapp-Stein Dimension Theorem,  it was proven in \cite{Sz 4} that all genuine principal representations of $\mspn$ induced from a unitary character are irreducible. This result was proved unconditionally in \cite{HMa}. It can be easily shown that the irreducibility results for principal series representations of the metaplectic double cover of $\gspn$ given in \cite{Sz 5} and \cite{Sz 6} also agree with the Knapp-Stein Dimension Theorem . The results in \cite{Sz 5} and \cite{Sz 6} are also unconditional since these are based on \cite{Sz 4}. In the case at hand the Knapp-Stein Dimension Theorem is reduced to the following statement:
\begin{prop} Suppose that $\tau$ is unitary. Then, $I(\tau,0)$ is reducible if and only if $\tau \simeq \tau^w$ and $\mu^{-1}(\tau,s)$ is analytic at $s=0$.
\end{prop}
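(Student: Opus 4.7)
The proposition is the rank-one specialization, for $\mslt$, of the Knapp--Stein Dimension Theorem, which the paper explicitly assumes in the metaplectic setting. The Weyl group here is $W = \{1, w\}$, so the stabilizer $\mathrm{Stab}_W(\tau)$ is either trivial (when $\tau \not\simeq \tau^w$) or all of $W$ (when $\tau \simeq \tau^w$); by Knapp--Stein, the number of irreducible constituents of $I(\tau, 0)$ equals the order of the $R$-group $R(\tau) \subseteq \mathrm{Stab}_W(\tau)$. In the second case, the additional criterion for $w$ to lie in $R(\tau)$ is that the Plancherel measure $\mu^{-1}(\tau, s)$ be finite and nonzero at $s = 0$. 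My plan is to unwind this characterization and check that it matches what the proposition asserts.

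Necessity is immediate: if $I(\tau, 0)$ is reducible, then $|R(\tau)| \geq 2$, which forces $R(\tau) = W$, hence $\tau \simeq \tau^w$ and $\mu^{-1}(\tau, s)$ is finite (in particular analytic) at $s = 0$. For sufficiency, assume $\tau \simeq \tau^w$ and $\mu^{-1}(\tau, s)$ is analytic at $s = 0$; to invoke the $R$-group criterion I must upgrade analyticity to ``finite and nonzero''. Inspecting the formula of Theorem \ref{main formula}, the denominator $L(1-ns, \chi^{-n}) L(1+ns, \chi^n)$ is always finite and nonzero at $s = 0$, while the numerator $L(ns, \chi^n) L(-ns, \chi^{-n})$ is singular precisely when $\chi^n$ is the trivial character; in every other case the numerator takes a finite nonzero value, since local Euler factors of the form $(1 - \chi(\varpi) q^{-s})^{-1}$ or $1$ never vanish. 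Thus in this setting ``analytic at $s = 0$'' is equivalent to ``finite and nonzero at $s = 0$'', so $w \in R(\tau)$, $|R(\tau)| = 2$, and $I(\tau, 0)$ is reducible.

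The main obstacle is not the bookkeeping above but the substantive content of the assumed Knapp--Stein theorem for $\mslt$. Its full proof in the metaplectic setting would require the classification of intertwinings between genuine parabolically induced representations together with Harish--Chandra's Plancherel theorem adapted to covers; in this paper it is taken on faith, by analogy with the treatment in \cite{Sz 4}, \cite{Sz 5}, \cite{Sz 6} and the unconditional result of \cite{HMa}. Granting Knapp--Stein, the proposition is an immediate consequence of the unwinding above combined with the explicit formula from Theorem \ref{main formula}.
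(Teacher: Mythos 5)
Your argument is essentially the paper's: the proposition is treated as nothing more than the rank-one (maximal parabolic) instance of the Knapp--Stein Dimension Theorem for $\mu_n$-covers, which the paper does not prove but attributes to Savin's preprint \cite{SA} rather than merely assuming it on faith. Your extra verification, via Theorem \ref{main formula}, that for unitary $\tau$ analyticity of $\mu^{-1}(\tau,s)$ at $s=0$ coincides with finiteness and nonvanishing is a correct and harmless supplement to the same reduction.
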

This proposition is contained in \cite{SA}, where the Knapp-Stein Dimension Theorem is proven for unitary parabolic induction from $P$ to $G$ and from $\widetilde{P}$ to $\widetilde{G}$ where $G$ is a reductive group defined over $F$, $P$ is a maximal parabolic subgroup of $G$,  $\widetilde{G}$ is a central extension of $G$ by $\mu_n$ and $\widetilde{P}$ is the preimage of $P$ in  $\widetilde{G}$.

\begin{thm}  \label{savin}Suppose that $\tau$ is unitary. Then $I(\tau,0)$ is reducible if and only if $n$ is odd and the projection of the central character of $\tau$ to $H^d(F)$ is a non-trivial quadratic character.
\end{thm}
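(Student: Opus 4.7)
Proof plan. The approach will be to use the preceding Proposition, which reduces reducibility of $I(\tau,0)$ for unitary $\tau$ to the conjunction of two conditions: (a) $\tau\simeq\tau^w$ and (b) $\mu^{-1}(\tau,s)$ is holomorphic at $s=0$. I will translate each of these into an arithmetic condition on the character $\chi$ attached to $\tau$ by Lemma \ref{cartan rep}, and then combine them by a parity case analysis.

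First, for the analyticity condition, I would read off from the formula in Theorem \ref{main formula} that the denominator $L(1-ns,\chi^{-n})L(1+ns,\chi^n)$ and the constant $q^{e(\chi^n,\psi)}$ are regular and non-vanishing at $s=0$, while each $L$-factor in the numerator has a simple pole at $s=0$ precisely when $\chi^n$ is trivial. Hence $\mu^{-1}$ is holomorphic at $s=0$ if and only if $\chi^n\neq 1$ as a character of $F^*$.

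Next, to analyze $\tau\simeq\tau^w$, I will use the Stone--von Neumann parametrization (Lemma \ref{cartan rep}): genuine irreducible smooth representations of $\widetilde{H}$ are determined by their central characters. I will compute the central character of $\tau^w$ directly from the definition $\tau^w(g)=\tau(s(w)gs(w)^{-1})$ via the cocycle formula \eqref{rao}. The key intermediate step is the identity $s(w)(h(a),1)s(w)^{-1}=(h(a^{-1}),(-1,a^2))$ for $a\in F^{*d}$, combined with the observation that $a^2\in F^{*n}$ lies in the kernel of the $n$-th Hilbert symbol by Lemma \ref{kernal p n 1}, so that the cocycle correction is trivial. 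Using $\xi(a^{-1})=\xi(a)^{-1}$, the equality of central characters is then equivalent to $\chi^2(a)\,\xi^2(a)=1$ for all $a\in F^{*d}$.

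Finally, I would combine the two conditions via a case split. For odd $n$, Lemma \ref{split hilbert} gives $\xi\equiv 1$ and $d=n$, so the condition becomes $\chi^2|_{F^{*n}}=1$; using the identity $\chi^n(y)=\chi(y^n)$, condition (b) reads $\chi|_{F^{*n}}\neq 1$. The conjunction of (a) and (b) is therefore exactly the statement that the projection $\chi|_{F^{*n}}$ of the central character to $H^d(F)$ is a non-trivial quadratic character. For even $n$, the explicit formula $\xi^2(a)=(-1,\beta_\varpi(a))_2$ from Lemma \ref{split porp}, together with the assumption $\mu_n\subset F^*$ (which for $4\mid n$ forces $-1\in F^{*2}$), is used to deduce that the condition $\chi^2\xi^2\equiv 1$ on $F^{*d}$ propagates through the relation $\chi^n(y)=\chi^2(y^d)$ and forces the invariant $\chi^n$ of $F^*$ to be trivial, so that the analyticity condition fails. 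Consequently no unitary $\tau$ produces a reducible induced representation when $n$ is even.

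The main obstacle is the final step in the even case: one must carefully use the precise splitting formulas from Lemma \ref{split porp} and the arithmetic consequences of $\mu_n\subset F^*$ to conclude that condition (a) forces $\chi^n\equiv 1$ on all of $F^*$, not merely on $F^{*d}$; the quadratic Hilbert-symbol contributions from $\xi^2$ must be shown to vanish or to be absorbed by the $\eta_\varpi^{2m}$ ambiguity from Lemma \ref{cartan rep}.
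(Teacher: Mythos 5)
Your overall architecture matches the paper's: invoke the Knapp--Stein proposition, read off from Theorem \ref{main formula} that $\mu^{-1}(\tau,s)$ is regular at $s=0$ exactly when $\chi^n\neq 1$, and translate $\tau\simeq\tau^w$ into a condition on $\chi$ via Stone--von Neumann. However, your translation of $\tau\simeq\tau^w$ contains a genuine error that is fatal precisely in the case you flag as the ``main obstacle''. The identity $\xi_{\psi,\varpi}(a^{-1})=\xi_{\psi,\varpi}(a)^{-1}$ is false for even $n$ in general: since $\gamma_\psi$ depends only on square classes and $a^{-1}=a\cdot(a^{-1})^2$, the correct identity is $\xi_{\psi,\varpi}(a^{-1})=\xi_{\psi,\varpi}(a)$ (equivalently, from the splitting property, $\xi(a^{-1})=\xi(a)^{-1}(-1,a)=\xi(a)^{-1}\xi(a)^2=\xi(a)$, because $\xi(a)^2=(-1,\beta_\varpi(a))_2$). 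With the correct identity the $\xi$-contributions cancel completely and the central-character comparison gives $\chi^2=1$ on ${F^*}^d$, i.e.\ $\chi^{2d}=1$ on $F^*$ --- which is exactly the paper's criterion, obtained there by noting $\tau^w\simeq i\bigl(\chi^{-1}_{\varpi,\psi}\bigr)$ and applying Lemma \ref{cartan rep}. With your version, the criterion becomes $\chi^2\xi^2=1$ on ${F^*}^d$, which for $n\equiv 2\ (\operatorname{mod}\,4)$ and $-1\notin {F^*}^2$ (a case not excluded by $\mu_n\subset F^*$; e.g.\ $n=2$, $q\equiv 3\ (\operatorname{mod}\,4)$) reads $\chi^n=(-1,\cdot)_2$, a \emph{non-trivial} quadratic character. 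Your own analyticity step would then make $\mu^{-1}(\tau,s)$ regular at $s=0$ and force $I(\tau,0)$ reducible for such $\tau$, contradicting the statement you are proving. So the quadratic Hilbert-symbol terms cannot be ``absorbed by the $\eta_\varpi^{2m}$ ambiguity'' (indeed $(-1,\beta_\varpi(\cdot))_2$ restricted to $F^*_d$ is not of the form $\eta_\varpi^{2m}$ when $(-1,\varpi)_2=-1$); the resolution is that they are not there to begin with, once the inversion identity is corrected.

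After that correction your argument collapses onto the paper's proof: for even $n$, $\tau\simeq\tau^w$ forces $\chi^n=1$, so $\mu^{-1}$ has a pole at $s=0$ and $I(\tau,0)$ is irreducible; for odd $n$ (where $\xi\equiv 1$ and your computation is already correct), $\tau\simeq\tau^w$ is $\chi^{2n}=1$, and combined with $\chi^n\neq 1$ this is precisely the assertion that the projection of the central character to $H^d(F)$ is a non-trivial quadratic character. Your steps for odd $n$, for $4\mid n$, and for the analyticity criterion are fine and essentially identical to the paper's.
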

\begin{proof} We first note that if $\tau \simeq i\bigl(\chi_{\varpi,\psi} \bigr)$ then $\tau^w \simeq i\bigl(\chi^{-1}_{\varpi,\psi} \bigr)$. Thus, by Lemma \ref{cartan rep} $\tau \simeq\tau^w$ if and only if $\chi=\chi^{-1}\eta_\varpi^{2m}$ for some $m=1,2, \ldots,d-1$. If we extend $\chi$ to $\F^*$ then the last equality is equivalent to $\chi^{2d}=1$. Suppose first that $n$ is even. In this case we have just shown that $\tau \simeq \tau^w$ implies that $\chi^n$ is trivial. By Theorem \ref{main formula} we occlude that $\mu^{-1}(\tau,s)$ has a pole at $s=0$. Suppose now that $n$ is odd and that  $\tau \simeq \tau^w$. In this case $\chi^{2n}$ is trivial. Thus, if $\chi^n$ is not trivial then $\mu^{-1}(\tau,s)$ is analytic at $s=0$. The assertions $\chi^{2n}=1$, \, $\chi^n \neq 1$ are equivalent to the assertion that the projection of the central character of $\tau$ to $H^d(F)$ is a non-trivial quadratic character.
\end{proof}
\section{Appendix: A result of W. Jay Sweet.}
In this appendix we shall not assume any restriction on the $p$-adic field $F$. Let $\chi$ be a character of  $\F^*$ and let $\psi$ be a non-trivial character of $\F$. We intend to prove here that for $Re(s)>>0$
\begin{equation} \label{J.Sweet} \lim_{r \rightarrow \infty}\int_{\Pf^{-r}}\chi(x) \ab x \ab^s \gamma_\psi(x)^{-1} \psi(x) \, d_\psi^* x=\gamma_F^{-1}(\psi_{-1}) \chi(-1)\gamma^{-1}(2s,\chi^{2},{\psi_{_2}}) \gamma(s+\half,\chi,\psi). \end{equation}
Our proof follows closely the original proof of Sweet in \cite{Sweet} although we did not keep all of  his notation.
\begin{lem} Suppose that \eqref{J.Sweet} holds  provided that $\psi$ is unramified. Then, it holds for all non-trivial $\psi$.
\end{lem}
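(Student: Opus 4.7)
Every non-trivial additive character of $F$ has the form $\psi^0_a$ for some unramified $\psi^0$ and some $a \in F^*$. The plan is to show that both sides of \eqref{J.Sweet}, viewed as functions of the underlying character, transform by the same multiplicative factor under $\psi^0 \leadsto \psi^0_a$. Combined with the assumed identity for the unramified $\psi^0$, this yields \eqref{J.Sweet} for $\psi^0_a$ and hence for every non-trivial $\psi$.

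For the left-hand side I would perform the change of variable $y = ax$ in the integral defining $\mathrm{LHS}(\psi^0_a)$. Two ingredients enter: the rescaling $d^*_{\psi^0_a}x = |a|^{1/2}\,d^*_{\psi^0}x$ of the self-dual multiplicative measure, and the identity $\gamma_{\psi^0_a}(x) = \gamma_{\psi^0}(x)(a,x)_2$, which follows from $\gamma_\psi(a) = \gamma_F(\psi_a)/\gamma_F(\psi)$ together with the cocycle relation $\gamma_\psi(xy) = \gamma_\psi(x)\gamma_\psi(y)(x,y)_2$. After substitution, using the quadratic Hilbert symbol identities $(a,a^{-1})_2 = (-1,a)_2$, $(y,a^{-1})_2 = (y,a)_2^{-1}$, and $\gamma_{\psi^0}(a^{-1}) = \gamma_{\psi^0}(a)^{-1}(-1,a)_2^{-1}$, the combined integrand factor $\gamma_{\psi^0}(y/a)^{-1}(a,y/a)_2^{-1}$ collapses to $\gamma_{\psi^0}(y)^{-1}\gamma_{\psi^0}(a)$, so that
\[
\mathrm{LHS}(\psi^0_a) \;=\; |a|^{\frac12 - s}\,\chi^{-1}(a)\,\gamma_{\psi^0}(a)\,\mathrm{LHS}(\psi^0).
\]

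For the right-hand side I would track each of the three factors. Since $(\psi^0_a)_{-1} = (\psi^0)_{-a}$ and $(\psi^0_a)_2 = (\psi^0)_{2a}$, the ratio $\gamma_F^{-1}((\psi^0)_{-a})/\gamma_F^{-1}((\psi^0)_{-1})$ simplifies to $\gamma_{\psi^0}(a)^{-1}(-1,a)_2^{-1}$ directly from the definition of the normalized Weil index. Because the $L$-factors in $\gamma(s,\chi,\psi)$ are independent of $\psi$, equation \eqref{epsilon change psi} yields transformation factors $\chi(a)|a|^s$ for $\gamma(s+\tfrac12,\chi,\psi^0)$ and $\chi^2(a)|a|^{2s-\frac12}$ for $\gamma(2s,\chi^2,\psi^0_2)$. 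Assembling these, $\mathrm{RHS}(\psi^0_a)/\mathrm{RHS}(\psi^0) = \gamma_{\psi^0}(a)^{-1}(-1,a)_2^{-1}\chi^{-1}(a)|a|^{\frac12 - s}$.

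Equality of the two transformation factors then reduces to the single identity $\gamma_{\psi^0}(a)^2 = (-1,a)_2^{-1}$, which follows from $\gamma_\psi(a^2) = 1$, the cocycle relation (giving $\gamma_\psi(a)^2 = (a,a)_2^{-1}$), and the standard $(a,a)_2 = (-1,a)_2$. The only real subtlety I foresee is justifying that the substitution $y = ax$ respects the regularization: since $a\Pf^{-r} = \Pf^{-r+v(a)}$ is cofinal with $\Pf^{-r}$ and the Mellin transforms involved converge absolutely for $\operatorname{Re}(s)$ sufficiently large, the limit is unaffected. Beyond this, the argument is pure bookkeeping of Weil indices and Hilbert symbols.
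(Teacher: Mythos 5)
Your proposal is correct and follows essentially the same route as the paper: the paper writes an arbitrary $\psi$ as a twist of an unramified character by $\varpi^{e(\psi)}$, performs the same change of variables in the regularized integral using $\gamma_\psi(x)=\gamma_{\psi'}(x)(x,\varpi^{e(\psi)})_2$ and \eqref{epsilon change psi}, and closes with the same quadratic Weil-index identity ($\gamma_{\psi'}(a)^2=(-1,a)_2$, in the guise $\gamma_{\psi'}(\varpi^n)(-1,\varpi^n)\gamma_F(\psi'_{-1})=\gamma_F(\psi_{-1})$). Your version merely allows a general twisting element $a$ and compares transformation factors of both sides, which is a cosmetic difference; your cofinality remark is the right justification for the regularized limit (absolute convergence is not needed there).
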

\begin{proof} Assume that $e(\psi)=n$. Then $\psi'=\psi_{(\varpi^n)}$ is unramified and $d_\psi x=q^{\frac n 2}d_{\psi'} x=q^{\frac n 2}dx$. By definition of the Weil index we have
$$\gamma_\psi(x)=\gamma_{\psi'}(x)(x,\varpi^n)_2.$$
This implies that
$$\lim_{r \rightarrow \infty}\int_{\Pf^{-r}}\gamma_\psi^{-1}(x)\chi(x) \ab x \ab^s \psi(x) \, d_\psi^*x=q^{\frac n 2}\lim_{r \rightarrow \infty}\int_{\Pf^{-r}}\gamma_{\psi'}^{-1}(x)(x,\varpi^n)_2\chi(x) \ab x \ab^s \psi'(x\varpi^{-n}) \, d^*x.$$
We make the change integration variables $a=x \varpi^{-n}$ and obtain
$$\lim_{r \rightarrow \infty}\int_{\Pf^{-r}}\gamma_\psi^{-1}(x)\chi(x) \ab x \ab^s \psi(x) \, d_\psi^*x=q^{\frac n 2 -ns}\gamma_{\psi'}^{-1}(\varpi^n)(-1,\varpi^n)\chi(\varpi^n)\lim_{r \rightarrow \infty}\int_{\Pf^{-r}}\gamma_{\psi'}^{-1}(a)\chi(a) \ab a \ab^s \psi'(a) \, d^*a.$$
Since $\psi'$ is unramified it follows from our assumption that
$$\lim_{r \rightarrow \infty}\int_{\Pf^{-r}}\gamma_\psi^{-1}(x)\chi(x) \ab x \ab^s \psi(x) \, d_\psi^*x=$$ $$q^{\frac n 2 -ns}\gamma_{\psi'}^{-1}(\varpi^n)(-1,\varpi^n)\chi(\varpi^n) \gamma_F^{-1}(\psi'_{-1})\chi(-1)\gamma^{-1}(2s,\chi^{2},\psi'_2) \gamma(s+\half,\chi,\psi').$$
By \eqref{epsilon change psi} we have $$\lim_{r \rightarrow \infty}\int_{\Pf^{-r}}\gamma_\psi^{-1}(x)\chi(x) \ab x \ab^s \psi(x) \, d_\psi^*x=$$ $$\gamma_{\psi'}^{-1}(\varpi^n)(-1,\varpi^n) \gamma_F^{-1}(\psi_{-1}')\chi(-1)\gamma^{-1}(2s,\chi^{2},{\psi_{_2}}) \gamma(s+\half,\chi,\psi).$$
It is left to show that
$$\gamma_{\psi'}(\varpi^n)(-1,\varpi^n) \gamma_F(\psi_{-1}')=\gamma_F(\psi_{-1}).$$
Indeed, $$\gamma_{\psi'}(\varpi^n)(-1,\varpi^n) \gamma_F(\psi_{-1}')=\gamma_{\psi'}(-\varpi^{-n})\gamma_{\psi'}^{-1}(-1) \gamma_F(\psi_{-1}')=\gamma_F(\psi'_{(-\varpi^{-n})})=\gamma_F(\psi_{-1}).$$
\end{proof}

We shall assume from this point that $\psi$ is normalized.
Define $$e(2,\F)=log_q[\Of:2\Of]=-log_q \ab 2 \ab$$
and define $$c_\psi(a)= \lim_{r \rightarrow \infty}\int_{\Pf^{-r}} \psi(ax^2) \, dx.$$ With this
notation:
$$ \gamma_\psi(a)=\ab a \ab ^\half c_\psi(a)c^{-1}_\psi(1).$$
Proposition 3.3 of \cite{CC} states that
\begin{equation} \label{Weil id} \gamma_\F(a\psi)=\ab 2a \ab^{\half}c_\psi(a).\end{equation}
We fix an integer $M$ such that $M \geq \max\{2e+1,m(\chi)\}$. By Hensel's lemma, $1+\Pf^{2e+1} \subseteq {\F^*}^2$. This implies (Proposition 3.1 of \cite{CC}) that for $\ab a \ab \geq q^{-M}$ we have
\begin{equation} \label{CC identity} c_\psi(a)=\int_{\Pf^{-M}} \psi(ax^2)\, dx. \end{equation}
Note that since $\gamma_\psi \in \C^{1}$, \eqref{CC identity} implies
\begin{equation} \label{from CC identity} \gamma_\psi^{-1}=\gamma_{(\psi_{-1})}. \end{equation}
The following is Lemma 2.2 of \cite{Sweet}. We give an elementary proof.
\begin{lem} \label{sweet lemma}  For $z \in \F^*$ we have
$$\int_{\Pf^{-M}} \psi^{-1}(zc^2-2c) \, dc=\psi(z^{-1})\ab z \ab ^{-\half}\gamma_\psi^{-1}(z)c_\psi(-1)1_{\Pf^{-M}}(z^{-1}).$$
\end{lem}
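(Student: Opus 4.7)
The plan is to complete the square in the exponent and then split into two cases according to whether or not $z^{-1} \in \Pf^{-M}$, since this determines whether the right hand side vanishes. The key algebraic identity is $zc^2 - 2c = z(c - z^{-1})^2 - z^{-1}$, which rewrites the integral as
\[
\int_{\Pf^{-M}}\psi^{-1}(zc^2-2c)\,dc \;=\; \psi(z^{-1})\int_{\Pf^{-M}}\psi^{-1}\bigl(z(c-z^{-1})^2\bigr)\,dc.
\]

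In the generic case $z^{-1} \in \Pf^{-M}$ (equivalently $|z| \ge q^{-M}$), the translation $c' = c - z^{-1}$ preserves both the additive subgroup $\Pf^{-M}$ and the Haar measure, so the remaining integral becomes $\int_{\Pf^{-M}}\psi^{-1}(zc'^2)\,dc'$. Since $\psi^{-1} = \psi_{-1}$ is also unramified and $|z|\ge q^{-M}$, identity \eqref{CC identity} applied with $\psi$ replaced by $\psi^{-1}$ identifies this with $c_{\psi^{-1}}(z)$. From the definition $c_{\psi'}(a) = |a|^{-1/2}\gamma_{\psi'}(a)\, c_{\psi'}(1)$ together with \eqref{from CC identity}, which gives $\gamma_{\psi^{-1}} = \gamma_\psi^{-1}$, and the equality $c_{\psi^{-1}}(1) = c_\psi(-1)$, this assembles into the claimed formula.

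The substantive case is showing the integral vanishes when $z^{-1} \notin \Pf^{-M}$, i.e.\ $|z| < q^{-M}$. My plan is a shift argument: I will produce $t_0 \in \Pf^{-M}$ with $v(t_0) = -e-1$ and $\psi(2t_0) \neq 1$, which exists because $\psi$ is non-trivial on $\Pf^{-1}/\OF$ and because $M \ge 2e+1$ forces $-e-1 \ge -M$. Translating $c \mapsto c + t_0$ preserves $\Pf^{-M}$ and multiplies the integrand by $\psi^{-1}(2zct_0 + zt_0^2 - 2t_0)$. The bounds $v(z) > M$, $v(t_0) = -e-1$, and $M \ge 2e+1$ combine to give $v(2zct_0) \ge 0$ uniformly in $c\in\Pf^{-M}$ and $v(zt_0^2) \ge 0$, so these two contributions lie in $\OF$ and are killed by $\psi^{-1}$. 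What remains is the factor $\psi^{-1}(-2t_0) = \psi(2t_0) \neq 1$, forcing the integral to equal itself multiplied by a nontrivial root of unity, hence zero.

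The main obstacle will be the bookkeeping in the second case: checking that the three valuation inequalities needed for the shift argument are simultaneously solvable under the standing hypothesis $M \ge \max\{2e+1, m(\chi)\}$. Note that \eqref{CC identity} used in the first case is itself built on $M \ge 2e+1$, so both cases exploit the same quantitative Hensel input, and the same $M$ works uniformly.
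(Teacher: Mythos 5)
Your argument is correct, and its second half is genuinely different from the paper's. For $z^{-1}\in\Pf^{-M}$ you do exactly what the paper does: complete the square, translate by $z^{-1}$ inside $\Pf^{-M}$, and identify the remaining Gaussian integral via \eqref{CC identity}; your detour through $c_{\psi^{-1}}(z)$ is equivalent to the paper's $c_\psi(-z)$ since $\psi^{-1}(zc'^2)=\psi(-zc'^2)$ and $|-z|=|z|\ge q^{-M}$, so you could even quote \eqref{CC identity} verbatim instead of re-applying it to $\psi^{-1}$ (the identity $c_{\psi^{-1}}(a)=|a|^{-1/2}\gamma_{\psi^{-1}}(a)c_{\psi^{-1}}(1)$ you use does hold for any non-trivial character, by \eqref{Weil id}).

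Where you diverge is the vanishing case $|z|<q^{-M}$. The paper first substitutes $x=-z^{-1}y$ to reduce to $\int_{1+\Pf^{k-M}}\psi(-z^{-1}y^2)\,dy=0$ with $k$ the valuation of $z$, and then splits into two sub-cases: for $M<k<2M$ it perturbs multiplicatively by square roots $y\mapsto y\sqrt{1+u\varpi^t}$, which rests on Hensel's lemma ($1+\Pf^{2e+1}\subseteq{F^*}^2$) and an auxiliary integration over $u\in\Of$; for $k\ge 2M$ it expands directly. Your single additive shift $c\mapsto c+t_0$, with $t_0$ of valuation $-e-1$ and $\psi(2t_0)\neq 1$, applied to the original integral, handles both sub-cases at once: your estimates check out, namely $|2zct_0|=|2||z||c||t_0|\le q^{-e}\,q^{-(M+1)}\,q^{M}\,q^{e+1}=1$ uniformly for $c\in\Pf^{-M}$, $|zt_0^2|\le q^{2e+1-M}\le 1$ by $M\ge 2e+1$, and $t_0\in\Pf^{-M}$ since $M\ge e+1$; such a $t_0$ exists because the normalized $\psi$ is trivial on $\Of$ but not on $\Pf^{-1}$. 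Hence $I=\psi(2t_0)I$ with $\psi(2t_0)\neq1$, so $I=0$. What your route buys is economy and elementarity: no case split in the valuation of $z$, no square-root construction, and no use of Hensel's lemma in this half of the argument (the hypothesis $M\ge 2e+1$ enters only through the bound on $|zt_0^2|$). What the paper's route makes explicit is the reduction to a Gauss-sum-type integral over a shell $1+\Pf^{k-M}$, a shape that recurs in related computations, but as a proof of this lemma your version is cleaner and equally rigorous.
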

\begin{proof} Write $x=c-z^{-1}.$ This gives
\begin{equation} \label{var c}\int_{\Pf^{-M}} \psi^{-1}(zc^2-2c) \, dc=\psi(z^{-1}) \int _{-z^{-1}+\Pf^{-M}} \psi(-zx^2) \, dx.\end{equation}
Assume first that $\ab z \ab  \geq q^{-M}$. In this case  $-z^{-1} \in \Pf^{-M}$. Thus, by \eqref{CC identity} and \eqref{from CC identity} we have
$$\int_{\Pf^{-M}} \psi^{-1}(zc^2-2c)\, dc=\psi(z^{-1}) \int _{\Pf^{-M}} \psi(-zx^2) \, dx=c_\psi(-z)=\ab z \ab ^{-\half}\gamma_\psi^{-1}(z)c_\psi(-1).$$
The Lemma is now proven for this case. Suppose now  that $\ab z \ab  < q^{-M}$. We shall write $z=u_{_0}\varpi^k$, where $u \in \Of^*$, $k>M$. Note that in this case $$-z^{-1}+\Pf^{-M}=-z^{-1}(1+\Pf^{k-M}).$$ Thus, by  changing $x=-z^{-1}y$ in the right hand side of \eqref{var c}, we only need to show that

\begin{equation} \label{show this} \int _{1+\Pf^{k-M}} \psi(-z^{-1}y^2) \, dy=0.\end{equation}
Suppose that  $2M > k >M$. In this case we can pick integer $t$ such that
$$\max \{2k-2M,2e+1\} \leq t <k.$$
For any $u\in \Of$, we have $1+u\varpi^t \in {F^*}^2$. We claim that we can always find a solution for the equation $x^2=1+u\varpi^t$ such that $x \in 1+\Pf^{k-M}$. Indeed, if
$$\ab x^2-1 \ab \leq q^{-t} \leq q^{-(2k-2M)},$$
then since $\ab -x-1 \ab \cdot \ab x-1 \ab =\ab x^2 -1 \ab$ it is not possible that both $\ab -x-1 \ab$ and $\ab x-1 \ab$ are greater then $q^{-(k-M)}$. We shall denote this solution by $\sqrt{1+u\varpi^t}$. It follows that for any $u \in \Of$ we can change $y\mapsto y\sqrt{1+u\varpi^t}$ in the left hand side of \eqref{show this}, without changing the measure or the domain of integration. This gives

$$\int _{1+\Pf^{k-M}} \psi(-z^{-1}y^2) \, dy=\int_{\Of} \int_{1+\Pf^{k-M}} \psi_{-u_0^{-1}}\bigl(\varpi^{-k}y^2(1+u\varpi^t)\bigr) \, dy \, du=$$
$$\int_{1+\Pf^{k-M}} \psi_{-u_0}(\varpi^{k} y^2) \Bigl( \int_{\Of} \psi_{-y^2u_0^{-1}}(\varpi^{t-k}u) \, du \Bigr) \, dy.$$
Since $k>t$, the last inner integral vanishes. We now deal with the case $k \geq 2M$. We change $y=1+u \varpi^{k-M}$ in the left hand side of \eqref{show this}. We have to show that
$$\int _{\Of} \psi_{-u_{_0}^{-1}}(2\varpi^{-M}u) \psi_{-u_{_0}^{-1}}(\varpi^{k-2M}u^2) \, du=0.$$
This follows from the fact that since $k \geq 2M$, $\psi_{-u_{_0}^{-1}}(\varpi^{k-2M}u^2)=1$ for all $u \in \Of$ and from the fact that the map $u \mapsto  \psi_{-u_{_0}^{-1}}(2\varpi^{-M}u)$ is a non-trivial character of $\Of$.
\end{proof}

In what follows, we shall have two different additive characters. To stress the dependence of the Fourier transom of $\phi \in S(F)$ on the additive character we shall write $\phi_\psi$ instead of $\widehat{\phi}$. While we take the $\psi$-self dual measure for the integration defining the $\psi$-Fourier transform we shall assume here that  the measure in the integral defining the Mellin transforms is $d^*x$. This last convenient assumption does not change the definition of the Tate $\gamma-$factor in \eqref{tate def}. Define \begin{equation} \label{f def} f^{^M}(x)=\psi(2x) \1_{\Pf^{-M}}(x).\end{equation} By a straight forward computation one sees that

\begin{equation} \label{f fou} f_{{\psi_{_2}}}^{^M}(x)=q^{M-\frac e 2} \1_{1+\Pf^{M-e}}(-x) \end{equation}
and that
\begin{equation} \label{f fou mel} \zeta(s,\chi^2,f_{{\psi_{_2}}}^{^M},)=q^{\frac e 2}=\ab 2 \ab ^{-\half}.\end{equation}
We now come to the heart of the proof given in \cite{Sweet}. Denote
$$I(M,\chi,\psi,s)=\int_{\Pf^{-M}}\gamma_\psi^{-1}(x)\chi(x) \ab x \ab^s \psi(x) \, d^*x.$$
\begin{thm} \label{sweet thm}$$I(M,\chi,\psi,s)=\gamma_F^{-1}(\psi_{-1})\chi(-1)\gamma^{-1}(2s,\chi^{2},{\psi_{_2}}) \gamma(s+\half,\chi,\psi).$$
\end{thm}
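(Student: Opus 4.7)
The strategy is to multiply both sides of Lemma~\ref{sweet lemma} by $\chi(z)|z|^{s}$, integrate $d_{\psi}^{*}z$ over $F^{*}$, and compute each side independently via Tate's functional equation. For the left-hand side, I would swap the order of integration and substitute $w=-zc^{2}$ in the inner integral over $z$; this produces $\chi(-1)\chi^{-2}(c)|c|^{-2s}$ times the regularized Tate integral $\int_{F^{*}}\chi(w)|w|^{s}\psi(w)\,d_{\psi}^{*}w=\gamma(1-s,\chi^{-1},\psi)$. The remaining outer integral $\int_{\Pf^{-M}}\chi^{-2}(c)|c|^{1-2s}\psi_{2}(c)\,d_{\psi}^{*}c$ evaluates, via Tate applied with $\psi_{2}$ together with the translation formula \eqref{epsilon change psi}, to $|2|^{-1/2}\gamma(2s,\chi^{2},\psi_{2})$. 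Thus $\mathrm{LHS}=\chi(-1)|2|^{-1/2}\gamma(1-s,\chi^{-1},\psi)\gamma(2s,\chi^{2},\psi_{2})$.

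On the right-hand side, the substitution $w=z^{-1}$ followed by $v=-w$, combined with the cocycle identity $\gamma_{\psi}(x)\gamma_{\psi}(x^{-1})=(-1,x)_{2}$ (obtained by specialising $\gamma_{\psi}(xy)=\gamma_{\psi}(x)\gamma_{\psi}(y)(x,y)_{2}$ at $y=x^{-1}$ and using $(x,x^{-1})_{2}=(-1,x)_{2}$) together with $\gamma_{\psi}(-w)=\gamma_{\psi}(-1)\gamma_{\psi}(w)(-1,w)_{2}$, yields $\gamma_{\psi}^{-1}(w^{-1})=\gamma_{\psi}(-w)\gamma_{\psi}^{-1}(-1)$. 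Using also $\gamma_{\psi}(v)\psi(-v)=\gamma_{\psi_{-1}}^{-1}(v)\psi_{-1}(v)$ from \eqref{from CC identity}, the RHS becomes $c_{\psi}(-1)\gamma_{\psi}^{-1}(-1)\chi(-1)\,I(M,\chi^{-1},\psi_{-1},\half-s)$. Equating LHS and RHS, and invoking Weil's identity \eqref{Weil id} ($c_{\psi}(-1)=|2|^{-1/2}\gamma_{F}(\psi_{-1})$) together with $\gamma_{\psi}(-1)=\gamma_{F}(\psi_{-1})/\gamma_{F}(\psi)$ to simplify $c_{\psi}(-1)\gamma_{\psi}^{-1}(-1)=|2|^{-1/2}\gamma_{F}(\psi)$, I solve for $I(M,\chi^{-1},\psi_{-1},\half-s)=\gamma_{F}^{-1}(\psi)\gamma(1-s,\chi^{-1},\psi)\gamma(2s,\chi^{2},\psi_{2})$. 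The relabelling $(\chi,\psi,s)\mapsto(\chi^{-1},\psi_{-1},\half-s)$, together with $\gamma(s,\chi,\psi_{a})=\chi(a)|a|^{s-\half}\gamma(s,\chi,\psi)$ (from \eqref{epsilon change psi}) and the Tate reciprocity $\gamma(s,\chi,\psi)\gamma(1-s,\chi^{-1},\psi)=\chi(-1)$ (from \eqref{Tate gamma}), then recasts this formula into \eqref{J.Sweet}.

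The main technical obstacle is the regularization and Fubini step in computing the LHS: the integral $\int_{F^{*}}\chi(w)|w|^{s}\psi(w)\,d_{\psi}^{*}w$ does not converge absolutely for any $s$ and must be interpreted via its meromorphic continuation provided by Tate. On the strip $\mathrm{Re}(s)<1/2$, Lemma~\ref{sweet lemma} shows that the original (un-swapped) $z$-integrand decays like $|z|^{-1/2}$, so the outer integral is absolutely convergent there; I would justify the interchange by truncating $z$ to $\{q^{-M}\leq|z|\leq q^{R}\}$, computing with $R$ finite, and taking $R\to\infty$---the stabilization guaranteed by $M\geq\max\{2e+1,m(\chi)\}$ ensures that the various truncated Tate integrals have reached their limiting $\gamma$-values. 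The resulting identity then extends to meromorphic functions of $s$ by analytic continuation.
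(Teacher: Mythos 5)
Your proposal follows the same skeleton as the paper's argument: the paper's proof likewise runs the substitutions $x\mapsto x^{-1}$, $x\mapsto -x$ to convert $I(M,\chi,\psi,s)$ into the double integral $\int_{\F}\chi^{-1}(x)|x|^{\half-s}\bigl(\int_{\Pf^{-M}}\psi(xc^2)\psi(2c)\,dc\bigr)d^*x$ (this is exactly your ``integrate Lemma \ref{sweet lemma} against $\chi(z)|z|^s\,d^*z$'' step, read backwards), and then extracts one Tate $\gamma$-factor for $\psi$ and one for $\psi_2$. Your Weil-index bookkeeping ($\gamma_\psi^{-1}(w^{-1})=\gamma_\psi(-w)\gamma_\psi^{-1}(-1)$, $c_\psi(-1)\gamma_\psi^{-1}(-1)=|2|^{-\half}\gamma_F(\psi)$), the measure factor $|2|^{-\half}$ for $\psi_2$, and the final relabeling via $\gamma(s,\chi,\psi_a)=\chi(a)|a|^{s-\half}\gamma(s,\chi,\psi)$ and $\gamma(s,\chi,\psi)\gamma(1-s,\chi^{-1},\psi)=\chi(-1)$ all check out against the paper's formula.

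The genuine gap is in your evaluation of the left-hand side, i.e. precisely the Fubini/regularization step that the paper's auxiliary-function device exists to handle. After you swap the order and substitute $w=-zc^2$, the $z$-truncation $q^{-M}\le|z|\le q^R$ becomes the $c$-dependent range $q^{-M}|c|^2\le|w|\le q^R|c|^2$; in particular the lower cutoff $|w|\ge q^{-M}|c|^2$ never goes away, so the inner integral is \emph{not} ``a truncated Tate integral that has reached its limiting $\gamma$-value.'' When $\chi$ is unramified (the ramified case is fine, since all small shells vanish) the inner integral differs from $\gamma(1-s,\chi^{-1},\psi)$ by a nonzero geometric tail over $|w|<q^{-M}|c|^2$; this tail is killed only after the outer $c$-integration, because the factor $\chi^{-2}(c)|c|^{-2s}$ cancels its $c$-dependence and what remains is a constant times $\int_{\Pf^{-M}}\psi(2c)\,dc=0$ (here $M\ge 2e+1$ is used). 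You would also need to justify interchanging $R\to\infty$ with the $c$-integral, and to verify that the $c$-integral over the fixed region $\Pf^{-M}$ already captures all nonvanishing shells of the $\psi_2$-Tate integral (this does hold for $M\ge\max\{2e+1,m(\chi)\}$, but it requires the estimate $a(\chi^2)+e\le M$, which you did not check). None of this is fatal, but as written the central analytic claim is false for unramified $\chi$ and the argument is incomplete. The paper avoids these issues by multiplying $I$ by the auxiliary zeta integrals $\zeta(s+\half,\chi,\phi)$, with $\phi$ Schwartz and $\phi_\psi(0)=0$, and $\zeta(1-2s,\chi^{-2},f^{M}_{\psi_2})$, so that every interchange is performed on an absolutely convergent or bounded compactly supported integrand, and only at the very end applies the functional equation \eqref{tate def} twice; you should either adopt that device or supply the tail-cancellation and limit-interchange arguments sketched above.
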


\begin{proof}
$$\int_{\Pf^{-M}}\gamma_\psi^{-1}(x)\chi(x) \ab x \ab^s \psi(x) \, d^*x=\int_{\F}\gamma_\psi^{-1}(x)\chi(x) \ab x \ab^s \psi(x)1_{\Pf^{-M}}(x) \, d^*x=[x \mapsto x^{-1}]=$$
$$\int_{\F}\gamma_\psi^{-1}(x)\chi^{-1}(x) \ab x \ab^{-s} \psi(x^{-1})1_{\Pf^{-M}}(x^{-1}) \, d^*x=$$
$$c^{-1}_\psi(-1)\int_{\F}\chi^{-1}(x) \ab x \ab^{\half-s} \Bigl( \ab x \ab^{-\half}\gamma_\psi^{-1}(x)c_\psi(-1) \psi(x^{-1})1_{\Pf^{-M}}(x^{-1}) \Bigr)\, d^*x.$$
By Lemma \ref{sweet lemma},
$$I(M,\chi,\psi,s)=c^{-1}_\psi(-1)\int_{\F}\chi^{-1}(x) \ab x \ab^{\half-s} \Bigl( \int_{\Pf^{-M}} \psi^{-1}(xc^2-2c) \, dc \Bigr) \, d^*x=[x \mapsto -x]=$$
$$\chi(-1)c^{-1}_\psi(-1)\int_{\F}\chi^{-1}(x) \ab x \ab^{\half-s} \Bigl( \int_{\Pf^{-M}} \psi(xc^2) \psi(2c) \, dc \Bigr) \, d^*x.$$
Recalling \eqref{f def} we obtain
\begin{equation} \label{inter} I(M,\chi,\psi,s)=c^{-1}_\psi(-1)\chi(-1)\int_{\F}\chi^{-1}(x) \ab x \ab^{\half-s} \Bigl( \int_{\F} \psi(xc^2) f^{^M}(2c) \, dc \Bigr) \, d^*x.\end{equation}
Let $\phi \in S(\F)$ be such that $\phi_\psi(0)=0$ and such that $\zeta(s,\chi,\phi)$ is not the zero function and define:
\begin{equation} \label{I tag}I'(M,\chi,\psi,s)=I(M,\chi,\psi,s)\zeta(s+\half,\chi,\phi)\zeta(1-2s,\chi^{-2},f_{{\psi_{_2}}}^{^M}). \end{equation}
By \eqref{Weil id}, \eqref{f fou mel} and \eqref{inter} we have
$$I'(M,\chi,\psi,s)=\gamma^{-1}_F(\psi_{-1})\chi(-1)\int_{\F^*} \phi(y) \chi(y) \ab y \ab ^{s+\half} \, d^* \! y
\int_{\F}\chi^{-1}(x) \ab x \ab^{\half-s} \Bigl( \int_{\F} \psi(xc^2) f^{^M}(2c) \, dc \Bigr) \, d^*x$$
(note that since $Re(s)>>0$, both integrals in the right hand side are absolutely convergent).
Recalling that $d^*y=\frac {dy}{\ab y \ab}$ we get
$$I'(M,\chi,\psi,s)=\gamma^{-1}_F(\psi_{-1})\chi(-1)\int_{\F^*}\int_{\F^*} \chi(yx^{-1})\phi(y)\ab xy^{-1} \ab ^{\half-s}
\Bigl( \int_{\F} \psi(xc^2) f^{^M}(c) \, dc \Bigr) \, d^*x \, dy=$$
$$ [x=yz]=\gamma^{-1}_F(\psi_{-1})\chi(-1)\int_{\F^*}\int_{\F^*} \chi^{-1}(z)\phi(y)\ab z \ab ^{\half-s}
\Bigl( \int_{\F} \psi(yzc^2) f^{^M}(c) \, dc \Bigr) \, d^*z \, dy.$$
We would like to change to $z-y$ order of integration. Therefore, we need to show that
$$G(y,z)=\chi^{-1}(z)\phi(y)\ab z \ab ^{-\half-s} \Bigl( \int_{\F} \psi(yzc^2) f^{^M}(c) \, dc \Bigr)$$ is integrable on $\F \times \F$ (with respect to $dy \, dz$.) Indeed, since $\phi \in S(F)$, $G(y,z)$ vanishes for large values of $\ab y \ab$. By Lemma \ref{sweet lemma}, $G(y,z)$ also vanishes when $\ab yz \ab<q^{-M}$. This implies that as $\ab z \ab \rightarrow \infty$ then $G(y,z)$ is bounded by $c\ab z \ab ^{-\half-s}$. We also change the $y-c$ order of integration. This change is easily justified since
$$K(c,y)=\phi(y) ^{-\half-s}  \psi(yzc^2) f^{^M}(c)$$
is a bounded compactly supported function in the $c-y$ plane. Thus,
$$I'(M,\chi,\psi,s)=\gamma^{-1}_F(\psi_{-1})\chi(-1)\int_{\F^*}\int_{\F^*} \chi^{-1}(z)f^{^M}(c)\ab z \ab ^{\half-s}
\Bigl( \int_{\F}\phi(y) \psi(yzc^2)  \, dy \Bigr)  \,dc \, d^*z=$$
$$\gamma^{-1}_F(\psi_{-1})\chi(-1)\int_{\F^*}\int_{\F^*} \chi^{-1}(z)f^{^M}(c)\ab z \ab ^{\half-s}
\phi_\psi(zc^2) \,dc \, d^*z.$$
We note that
$$F(c,z)=\chi^{-1}(z)f^{^M}(c)\ab z \ab ^{\half-s} \phi_\psi(zc^2)$$
is compactly supported on the $c-z$ plane, and since we assume that $\phi_\psi$ is supported away from 0, it follows that $F(c,z)$ is bounded.
This implies that we can change the $c-z$ order of integration. We then change $zc^2=t$ in the inner $d^*z$ integral and obtain:
$$I'(M,\chi,\psi,s)=\gamma^{-1}_F(\psi_{-1})\chi(-1)\int_{\F^*}\int_{\F^*} \chi^{-1}(tc^{-2})f^{^M}(c)\ab tc^{-2} \ab ^{\half-s}
\phi_\psi(t) \, d^*t \, dc= $$
$$\gamma_F^{-1}(\psi_{-1})\chi(-1)\int_{\F^*}\phi_\psi(t) \chi^{-1}(t) \ab t \ab^{\half-s} \, d^*t \int_{\F^*} f^{^M}(c) \chi^{2}(c) \ab c \ab^{2s} \, d^*c.$$
Recalling \eqref{I tag}, we have shown that
$$\gamma^{-1}_F(\psi_{-1})\chi(-1) \zeta(-s+\half,\chi^{-1},\phi_\psi,)\zeta(2s,\chi^2,f^{^M})=I(M,\chi,\psi,s)\zeta(s+\half,\chi,\phi)\zeta(1-2s,\chi^{-2},f_{{\psi_{_2}}}^{^M}).$$
By \eqref{tate def}, the functional equation defining the Tate $\gamma$-factor, the theorem now follows.
\end{proof}

\end{document}